\numberwithin{equation}{section}
\newcommand{\R}{\mathbb{R}}
\newcommand{\C}{\mathbb{C}}
\newcommand{\wh}[1]{\widehat{#1}}
\newcommand{\norm}[1]{\| #1\|}
\newcommand{\eps}{\varepsilon}
\newcommand{\xonorm}[3]{\|#1\|_{L_t^{#2}L_x^{#3}}}
\newcommand{\xnorm}[4]{\|#1\|_{L_t^{#2}L_x^{#3}(#4\times\R^d)}}
\newcommand{\xonorms}[2]{\|#1\|_{L_{t,x}^{#2}}}
\newcommand{\xnorms}[3]{\|#1\|_{L_{t,x}^{#2}(#3\times\R^d)}}
\newcommand{\nsc}{\vert\nabla\vert^{1/2}}
\newtheorem{theorem}{Theorem}[section]
\newtheorem{lemma}[theorem]{Lemma}
\newtheorem{proposition}[theorem]{Proposition}
\theoremstyle{definition}
\newtheorem{definition}[theorem]{Definition}
\newtheorem{remark}[theorem]{Remark}
\theoremstyle{remark}
\begin{document}

			\title[Defocusing $\dot{H}^{1/2}$-critical NLS in high dimensions]
			{The defocusing $\dot{H}^{1/2}$-critical NLS in high dimensions}
			
								\author{Jason Murphy}
			\address{Department of Mathematics, UCLA,
             					  Los Angeles, CA 90095-1555, USA}
      								  \email{murphy@math.ucla.edu}
										
									\begin{abstract}
We consider the defocusing $\dot{H}^{1/2}$-critical nonlinear Schr\"odinger equation in dimensions $d\geq 5.$ In the spirit of Kenig and Merle \cite{KM}, we combine a concentration-compactness approach with the Lin--Strauss Morawetz inequality to prove that if a solution $u$ is bounded in $\dot{H}^{1/2}$ throughout its lifespan, then $u$ is global and scatters. 
									\end{abstract}

									\maketitle
									
									\section{Introduction}
									\label{introduction section}
									
	We consider the initial-value problem for the defocusing $\dot{H}_x^{1/2}$-critical nonlinear Schr\"odinger equation in dimensions $d\geq 5$:							
						\begin{equation}\label{nls}
						\left\{ \begin{array}{ll}
						(i\partial_t+\Delta)u=\vert u\vert^{\frac{4}{d-1}}u
						\\ u(0)=u_0,
						\end{array}\right.
						\end{equation}
with $u:\R_t\times\R_x^d\to\C$. This equation is deemed $\dot{H}_x^{1/2}$-critical because the rescaling that preserves the class of solutions to \eqref{nls}, that is,
				$u(t,x)\mapsto\lambda^{\frac{d-1}{2}} u(\lambda^2t,\lambda x),$ leaves invariant the $\dot{H}_x^{1/2}$-norm of the initial data. 

	In \cite{KM}, Kenig and Merle considered \eqref{nls} with $d=3$. They proved that if a solution $u$ stays bounded in $\dot{H}_x^{1/2}$ throughout its lifespan, then $u$ must be global and scatter. The same statement for $d=4$ was proven as a special case of the results of \cite{swamprat}. In this short note, we establish this result for all $d\geq 5$.  

	We begin with some definitions.
						\begin{definition}
						[Solution]
						\label{solution definition}
A function $u:I\times\R^d\to\C$ on a time interval $I\ni 0$ is a \emph{solution} to \eqref{nls} if it belongs to 
		$C_t\dot{H}_x^{1/2}(K\times\R^d)\cap L_{t,x}^{\frac{2(d+2)}{d-1}}(K\times\R^d)$ 
for every compact $K\subset I$ and obeys the Duhamel formula
			$$u(t)=e^{it\Delta}u_0-i\int_0^t e^{i(t-s)\Delta}(\vert u\vert^{\frac{4}{d-1}}u)(s)\,ds$$
for all $t\in I$. We call $I$ the \emph{lifespan} of $u$; we say $u$ is a \emph{maximal-lifespan solution} if it cannot be extended to any strictly larger interval. If $I=\R$, we say $u$ is \emph{global}. 
						\end{definition}
						\begin{definition}[Scattering size and blowup]
	We define the \emph{scattering size} of a solution $u$ to \eqref{nls} on a time interval $I$ by
						\begin{equation}
						\label{scattering size definition}
						S_I(u):=\int_I\int_{\R^d}\vert u(t,x)\vert^{\frac{2(d+2)}{d-1}}\,dx\,dt.
						\end{equation}
						
	If there exists $t\in I$ such that $S_{[t,\sup I)}(u)=\infty$, then we say $u$ \emph{blows up forward in time}. Similarly, if there exists $t\in I$ such that $S_{(\inf I,t]}(u)=\infty$, then we say $u$ \emph{blows up backward in time}. 

	On the other hand, if $u$ is global with $S_\R(u)<\infty$, then standard arguments show that $u$ \emph{scatters}, that is, there exist unique $u_\pm\in\dot{H}_x^{1/2}(\R^d)$ such that
				$$\lim_{t\to\pm\infty}\norm{u(t)-e^{it\Delta}u_{\pm}}_{\dot{H}_x^{1/2}(\R^d)}=0.$$ 
 								\end{definition}
								
	Our main result is the following			
							\begin{theorem}\label{scattering theorem}
Let $d\geq 5$ and let $u:I\times\R^d\to\C$ be a maximal-lifespan solution to \eqref{nls} such that $u\in L_t^\infty\dot{H}_x^{1/2}(I\times\R^d).$ Then $u$ is global and scatters, with
					$$S_\R(u)\leq C\big(\norm{u}_{L_t^\infty\dot{H}_x^{1/2}(\R\times\R^d)}\big)
					\quad\text{for some function}\quad C:[0,\infty)\to[0,\infty).$$

							\end{theorem}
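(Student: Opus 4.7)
The plan follows the Kenig--Merle concentration-compactness road map adapted to $\dot{H}_x^{1/2}$, culminating in a Lin--Strauss Morawetz argument. The first step is to record the standard local theory in the critical norm: Strichartz estimates adapted to the scattering pair $L_{t,x}^{2(d+2)/(d-1)}$, small-data global well-posedness and scattering in $\dot{H}_x^{1/2}$, and a stability/perturbation lemma allowing small modifications of approximate solutions in the scattering norm. In dimensions $d\ge 5$ the exponent $\tfrac{4}{d-1}\le 1$ makes the nonlinearity non-algebraic, but standard fractional-chain-rule and paraproduct estimates suffice.

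\textbf{Extraction of a minimal blowup solution.} Define the critical threshold
\[
E_c := \sup\bigl\{E : S_{\R}(u)<\infty \text{ for every solution } u \text{ with } \|u\|_{L_t^\infty \dot{H}_x^{1/2}}\le E\bigr\}.
\]
If Theorem~\ref{scattering theorem} fails, then $E_c<\infty$. Using a linear profile decomposition for sequences bounded in $\dot{H}_x^{1/2}(\R^d)$ (in the style of Keraani) together with the stability theory, I would extract a maximal-lifespan minimal blowup solution $u_c$ with $\|u_c\|_{L_t^\infty \dot{H}_x^{1/2}}=E_c$ that is \emph{almost periodic modulo the symmetries of \eqref{nls}}: there exist $N:I\to(0,\infty)$ and $x:I\to\R^d$ such that $\{N(t)^{-(d-1)/2} u_c(t,\,N(t)^{-1}\,\cdot\, + x(t))\}_{t\in I}$ is precompact in $\dot{H}_x^{1/2}(\R^d)$.

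\textbf{Killing $u_c$ with Morawetz.} A standard scenario reduction distinguishes (i) a \emph{soliton-like} global solution with $N(t)\equiv 1$ and (after translating) $x(t)\equiv 0$, from (ii) a \emph{self-similar / frequency-cascade} scenario in which $\inf_{t\in I}N(t)=0$. The weapon of choice is the Lin--Strauss Morawetz estimate
\[
\int_I\int_{\R^d}\frac{|u_c(t,x)|^{\frac{2(d+1)}{d-1}}}{|x|}\,dx\,dt \;\lesssim\; \|u_c\|_{L_t^\infty \dot{H}_x^{1/2}(I\times\R^d)}^2,
\]
which is exactly $\dot{H}^{1/2}$-scaling critical --- precisely why it is the right tool. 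For (i), almost periodicity gives a uniform lower bound $\int_{|x|\le R_0}|u_c(t,x)|^{2(d+1)/(d-1)}\,dx\gtrsim 1$ for every $t\in\R$; integrating in time forces the left-hand side to be infinite, a contradiction. Scenario (ii) is the expected main obstacle: the Morawetz estimate is pinned at the spatial origin, while $u_c$ may drift or rescale to zero, so one must first upgrade $u_c$ to additional regularity (typically a sub-critical Sobolev or $L_x^2$ bound, obtained by a negative-regularity / double-Duhamel argument in the style of Killip--Tao--Visan), and then use precompactness of the orbit to confine $u_c$ near the origin and to extract a sharp lower bound for the Morawetz integrand, again contradicting the boundedness of the right-hand side.
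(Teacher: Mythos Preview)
Your overall road map (profile decomposition, stability, minimal almost-periodic blowup, Lin--Strauss) is correct, but the scenario analysis has real gaps and diverges from the paper in ways that matter.

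First, the reduction you propose---soliton with $N(t)\equiv 1$ and $x(t)\equiv 0$ versus a cascade with $\inf N(t)=0$---is not the one the paper uses, and your version is not fully justified. You cannot arrange $x(t)\equiv 0$ by ``translating'': spatial translation fixes $x(0)$, not the whole trajectory, and there is no Galilean boost preserving the $\dot H^{1/2}$-critical structure. Since the Lin--Strauss weight $|x|^{-1}$ is pinned at the origin, a drifting center defeats the lower bound you want. The paper instead reduces (via a rescaling argument and the local-constancy lemma) to an almost periodic solution on $[0,T_{\max})$ with $\inf_t N(t)\ge 1$ and the quantitative control $|x(t)|\lesssim_u \int_0^t N(s)\,ds$. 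With this in hand, the Morawetz lower bound becomes $\int_1^T N(t)/(1+|x(t)|)\,dt$, and the $x(t)$ bound turns this into a logarithm that diverges as $T\to\infty$---this is the step that replaces your unjustified $x(t)\equiv 0$.

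Second, your scenario (ii) is both unnecessary and the wrong mechanism. Because the paper's reduction already forces $N(t)\ge 1$, there is no low-frequency cascade to kill, and no need for a negative-regularity/double-Duhamel argument. What remains is the dichotomy $T_{\max}<\infty$ versus $T_{\max}=\infty$. The finite-time case is dispatched by a short mass argument: the reduced Duhamel formula plus Bernstein gives $\|P_{\le N}u(t)\|_{L^2}\lesssim (T_{\max}-t)^{1/2}N^{1/2}$, while $\|P_{>N}u\|_{L^2}\lesssim N^{-1/2}$; optimizing forces $u\equiv 0$. Finally, a smaller point: your plan invokes paraproduct estimates for the stability theory, but the paper's key simplification is a stability result that requires the error to be small only in a derivative-free space $L_t^{4(d+1)/(d+3)}L_x^{2(d+1)/(d+3)}$, so the nonlinear profile decoupling reduces to pointwise estimates and avoids fractional-derivative issues entirely.
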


	Following the approach of Kenig and Merle \cite{KM}, we will establish Theorem \ref{scattering theorem} by combining a concentration-compactness argument with the Lin--Strauss Morawetz inequality of \cite{LS}. This estimate is very useful in the study of \eqref{nls}, as it has critical scaling for this problem. In fact, it is the concentration-compactness component that comprises most of this note; once we have reduced the problem to the study of almost periodic solutions, we can quickly bring the argument to a conclusion. 

	We first need a good local-in-time theory. Building off arguments of Cazenave and Weissler \cite{cw}, we can prove the following local well-posedness result (see Remark~\ref{deduce lwp}). 

						\begin{theorem}[Local well-posedness]\label{standard lwp}
Let $d\geq 5$ and $u_0\in\dot{H}_x^{1/2}(\R^d)$. Then there exists a unique maximal-lifespan solution $u:I\times\R^d\to\C$ to \eqref{nls} such that:

$\bullet$ $($Local existence$)$ $I$ is an open neighborhood of $0$.

$\bullet$ $($Blowup criterion$)$ If $\sup I<\infty$, then $u$ blows up forward in time. If $\vert\inf I\vert<\infty$, then $u$ blows up backward in time.

$\bullet$ $($Existence of wave operators$)$ For any $u_+\in\dot{H}_x^{1/2}(\R^d)$, there is a unique solution $u$ to \eqref{nls} such that $u$ scatters to $u_+$, that is,
							$$
							\lim_{t\to\infty}\norm{u(t)-e^{it\Delta}u_+}_{\dot{H}_x^{1/2}(\R^d)}=0.
							$$
A similar statement holds backward in time.

$\bullet$ $($Small-data global existence$)$ If $\norm{u_0}_{\dot{H}_x^{1/2}(\R^d)}$ is sufficiently small depending on $d$, then $u$ is global and scatters, with $S_\R(u)\lesssim\norm{u_0}_{\dot{H}_x^{1/2}(\R^d)}^{\frac{2(d+2)}{d-1}}.$ 

						\end{theorem}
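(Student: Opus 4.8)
The plan is to run a contraction-mapping / Strichartz argument in the critical space, following Cazenave--Weissler \cite{cw}. First I would fix the natural scaling-critical spaces. Since the equation is $\dot H_x^{1/2}$-critical, the relevant Strichartz norm is the one appearing in Definition~\ref{solution definition}: on a time interval $I$ define the solution space via $\norm{u}_{L_{t,x}^{2(d+2)/(d-1)}(I\times\R^d)}$ together with $\norm{u}_{C_t\dot H_x^{1/2}}$ and a dual-admissible norm for the nonlinearity at regularity $\tfrac12$. Concretely one works with $\ntw{1/2}u$ in an admissible Strichartz pair; the exponent $2(d+2)/(d-1)$ is exactly the one for which $\norm{\ntw{1/2}(e^{it\Delta}u_0)}$ in the corresponding dual pair matches $\norm{u_0}_{\dot H_x^{1/2}}$ by Sobolev embedding, and $2(d+2)/(d-1) = 2\cdot\tfrac{d+2}{d-1}$ is the power dictated by $\dot H^{1/2}$-scaling. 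The key nonlinear input is the fractional chain/product rule (Christ--Weinstein), which lets one estimate $\norm{\ntw{1/2}(|u|^{4/(d-1)}u)}$ in the dual Strichartz norm by $\norm{u}_{L_{t,x}^{2(d+2)/(d-1)}}^{4/(d-1)}\norm{\ntw{1/2}u}_{(\text{admissible})}$; here $d\ge 5$ ensures the power $4/(d-1)\le 1$, so no issues of insufficient smoothness of the nonlinearity arise (this is precisely why the high-dimensional case is clean and the borderline is $d=5$).

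With these estimates in hand, the four bullet points follow by standard machinery. For \emph{small-data global existence} one applies the contraction mapping to the Duhamel map $\Phi(u)(t)=e^{it\Delta}u_0 - i\int_0^t e^{i(t-s)\Delta}(|u|^{4/(d-1)}u)(s)\,ds$ on a ball in the global space $L_{t,x}^{2(d+2)/(d-1)}(\R\times\R^d)\cap (\text{Strichartz})$: Strichartz gives $\norm{\Phi(u)}\lesssim \norm{u_0}_{\dot H_x^{1/2}} + \norm{u}^{1+4/(d-1)}$ and a matching difference estimate, so for $\norm{u_0}_{\dot H_x^{1/2}}$ small $\Phi$ is a contraction, yielding a global solution with $S_\R(u)\lesssim\norm{u_0}_{\dot H_x^{1/2}}^{2(d+2)/(d-1)}$, and scattering follows by showing $e^{-it\Delta}u(t)$ is Cauchy in $\dot H_x^{1/2}$ as $t\to\pm\infty$. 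For \emph{local existence} with arbitrary data, one instead exploits that $\norm{e^{it\Delta}u_0}_{L_{t,x}^{2(d+2)/(d-1)}(I\times\R^d)}\to 0$ as $|I|\to 0$ (monotone convergence / density of Schwartz data), so on a sufficiently short interval the same contraction closes; this gives an open interval $I\ni 0$. Uniqueness and the construction of the maximal-lifespan solution come from a standard continuation/gluing argument, and the \emph{blowup criterion} is the usual statement that if $\sup I<\infty$ then $S_{[t,\sup I)}(u)=\infty$ for every $t\in I$ — otherwise the finite scattering norm near $\sup I$ would let one re-solve past $\sup I$, contradicting maximality.

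For the \emph{existence of wave operators}, given $u_+\in\dot H_x^{1/2}$ one solves the integral equation at $t=+\infty$, i.e. seeks $u$ with $u(t)=e^{it\Delta}u_+ + i\int_t^\infty e^{i(t-s)\Delta}(|u|^{4/(d-1)}u)(s)\,ds$; since $\norm{e^{it\Delta}u_+}_{L_{t,x}^{2(d+2)/(d-1)}((T,\infty)\times\R^d)}\to 0$ as $T\to\infty$, the contraction mapping produces a solution on $(T,\infty)$ scattering to $u_+$, which is then extended backward by the local theory. The only genuinely delicate point is the sharp form of the nonlinear Strichartz estimate at regularity $\tfrac12$ — one must apply the fractional product rule with exponents chosen so that both factors land on admissible Strichartz pairs and Sobolev embedding converts derivatives into the $L_{t,x}^{2(d+2)/(d-1)}$ norm without loss; I expect \textbf{this bookkeeping of Lebesgue/Sobolev exponents} to be the main (though routine) obstacle, and it is exactly where $d\ge 5$ is used. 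I would therefore refer the reader to \cite{cw} for the details and record the result as Theorem~\ref{standard lwp}, as the excerpt indicates (see Remark~\ref{deduce lwp}).
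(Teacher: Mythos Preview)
Your proposal is correct and follows a standard, clean route, but it differs from how the paper actually obtains Theorem~\ref{standard lwp}. You run the contraction-mapping argument \emph{directly} in $\dot H_x^{1/2}$-critical spaces, using the fractional chain rule to close the nonlinear Strichartz estimate. The paper instead proceeds in two steps (see Remark~\ref{deduce lwp} and Theorem~\ref{slwp}): it first quotes the Cazenave--Weissler result for data in the \emph{inhomogeneous} space $H_x^{1/2}$, where the contraction is run in mass-critical spaces (this is why the extra $L^2$ assumption is needed), and only afterwards removes the $L^2$ assumption \emph{a posteriori} by approximating $u_0\in\dot H_x^{1/2}$ by $H_x^{1/2}$-functions and invoking the stability result Theorem~\ref{stability theorem}. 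Your direct argument is arguably more self-contained and avoids the detour through $L_x^2$, while the paper's route has the virtue of recycling the stability theorem already needed for the Palais--Smale argument and of citing \cite{cw} off the shelf without reproving anything. Either way the exponent bookkeeping you flag is routine, and your remark that $d\ge 5$ makes $4/(d-1)\le 1$ (so the nonlinearity is $C^1$ and the fractional chain rule applies cleanly) is exactly the right observation.
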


					\subsection{Outline of the proof of Theorem \ref{scattering theorem}}
					\label{outline sub}
	We argue by contradiction and suppose that Theorem \ref{scattering theorem} fails. Recalling from Theorem \ref{standard lwp} that Theorem \ref{scattering theorem} holds for sufficiently small initial data, we deduce the existence of a threshold size, below which Theorem \ref{scattering theorem} holds, but above which we can find (almost) counterexamples. We then use a limiting argument to find blowup solutions \emph{at} this threshold, and show that such minimal blowup solutions must possess strong concentration properties. Finally, in Sections \ref{finite section} and \ref{lin--strauss section}, we show that solutions to \eqref{nls} with such properties cannot exist. 	

	The main property of these solutions is that of almost periodicity:
\begin{definition}[Almost periodic solutions]\label{almost periodic definition} A solution $u$ to \eqref{nls} with lifespan $I$ is said to be \emph{almost periodic }$($\emph{modulo symmetries$)$} if $u\in L_t^\infty\dot{H}_x^{1/2}(I\times\R^d)$ and there exist functions $N:I\to\R^+,$ $x:I\to\R^d$, and $C:\R^+\to \R^+$ such that
			$$\int_{\vert x-x(t)\vert\geq \frac{C(\eta)}{N(t)}}\big\vert\nsc u(t,x)\big\vert^2\, dx
			+\int_{\vert\xi\vert\geq C(\eta)N(t)}\vert\xi\vert\,\vert\wh{u}(t,\xi)\vert^2\, d\xi\leq\eta$$
for all $t\in I$ and $\eta>0$. We call $N$ the \emph{frequency scale function}, $x$ the \emph{spatial center function}, and $C$ the \emph{compactness modulus function}. 
\end{definition} 
\begin{remark}\label{arzela ascoli} Using Arzel\`a--Ascoli and Sobolev embedding, one can derive the following: for a nonzero almost periodic solution $u:I\times\R^d\to \C$ to \eqref{nls}, there exists $C(u)>0$ such that
			$$\int_{\vert x-x(t)\vert\leq\frac{C(u)}{N(t)}}\vert u(t,x)\vert^{\frac{2d}{d-1}}\,dx\gtrsim_u 1\quad
			\text{uniformly for }t\in I.$$ 
\end{remark}

	We can now describe the first major step in the proof of Theorem \ref{scattering theorem}. 
							
						\begin{theorem}
						[Reduction to almost periodic solutions]
						\label{reduction theorem}
	If Theorem \ref{scattering theorem} fails, then there exists a maximal-lifespan solution $u:I\times\R^d\to\C$ to \eqref{nls} such that $u$ is almost periodic and blows up in both time directions.			\end{theorem}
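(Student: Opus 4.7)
The plan is to follow the Kenig--Merle roadmap adapted to the $\dot{H}^{1/2}_x$-critical setting. For each $E\geq 0$ set
$$L(E) := \sup\big\{S_I(u) : u \text{ a maximal-lifespan solution to \eqref{nls} with } \norm{u}_{L_t^\infty\dot{H}_x^{1/2}(I\times\R^d)} \leq E\big\},$$
and let $E_c := \sup\{E\geq 0 : L(E)<\infty\}$. Theorem \ref{standard lwp} gives $E_c>0$, while the failure of Theorem \ref{scattering theorem} forces $E_c<\infty$. By definition of $E_c$ I extract solutions $u_n : I_n\times\R^d\to\C$ with $\norm{u_n}_{L_t^\infty\dot{H}_x^{1/2}}\to E_c$ and $S_{I_n}(u_n)\to\infty$. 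Translating in time, I arrange that each $u_n$ blows up in \emph{both} time directions from $t=0$, i.e.\ $S_{[0,\sup I_n)}(u_n)\to\infty$ and $S_{(\inf I_n,0]}(u_n)\to\infty$; this is achieved by choosing a time $\tau_n$ that roughly bisects the scattering size.

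Next I apply a linear profile decomposition to the bounded sequence $\{u_n(0)\}\subset\dot{H}_x^{1/2}$, of Keraani type, yielding
$$u_n(0) = \sum_{j=1}^J (\lambda_n^j)^{-(d-1)/2}[e^{it_n^j\Delta}\phi^j]\big(\tfrac{\cdot - x_n^j}{\lambda_n^j}\big) + w_n^J,$$
with asymptotically orthogonal parameters $(\lambda_n^j, x_n^j, t_n^j)\in\R^+\times\R^d\times\R$, asymptotic $\dot{H}_x^{1/2}$-decoupling of the profiles, and $\limsup_{n\to\infty}\xonorms{e^{it\Delta}w_n^J}{\frac{2(d+2)}{d-1}}\to 0$ as $J\to\infty$. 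Because $\dot{H}_x^{1/2}$ is not invariant under Galilean boosts, no frequency translations appear, but the fractional-regularity, non--mass-critical character of the norm means the extraction must be based on refined Strichartz estimates coupled to an inverse Strichartz inequality valid at $\dot{H}_x^{1/2}$ regularity.

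To each $\phi^j$ I associate a nonlinear profile $v^j$: a maximal-lifespan solution to \eqref{nls} which, according to whether $t_n^j/(\lambda_n^j)^2\to\pm\infty$ or stays bounded, is built either via the wave operators of Theorem \ref{standard lwp} or by local well-posedness with matching Cauchy data. I then form a nonlinear approximant $u_n^{\mathrm{app}} := \sum_{j\leq J}T_n^j v^j + e^{it\Delta}w_n^J$, where $T_n^j$ denotes the symmetry action, and apply a stability theorem to compare $u_n$ with $u_n^{\mathrm{app}}$. If every $v^j$ scattered with finite $S_\R$, stability would force $\limsup_n S_{I_n}(u_n)<\infty$, contrary to construction. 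Hence some profile, say $v^1$, satisfies $S_{I^1}(v^1)=\infty$; the asymptotic $\dot{H}_x^{1/2}$-decoupling, together with $\norm{v^1}_{L_t^\infty\dot{H}_x^{1/2}}\leq E_c$, forces every other profile to vanish and $w_n^J\to 0$. Setting $u := v^1$ produces a maximal-lifespan solution with $\norm{u}_{L_t^\infty\dot{H}_x^{1/2}}=E_c$ that, by the two-sided choice of the $\tau_n$, blows up in both time directions.

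Finally, I show $u$ is almost periodic modulo symmetries. Given any sequence $s_n\in I$, the solutions $u_n(t) := u(t+s_n)$ form another near-minimizing sequence to which I reapply the entire procedure above; minimality again forces a single surviving profile, which means $u(s_n)$ is precompact in $\dot{H}_x^{1/2}$ after applying appropriate scaling by $N(s_n)$ and translating by $x(s_n)$. This precompactness is equivalent to the existence of $N, x, C$ satisfying Definition \ref{almost periodic definition}. The principal obstacle throughout is the linear profile decomposition itself: at the fractional regularity $\dot{H}_x^{1/2}$ one cannot invoke a mass-critical or energy-critical statement off the shelf, and the corresponding inverse Strichartz inequality must be established by hand. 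Once that piece is in place, the passage through nonlinear profiles, stability, and extraction of the critical element proceeds along standard lines.
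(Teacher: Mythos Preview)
Your outline follows the same Kenig--Merle roadmap as the paper: define the critical threshold $E_c$, prove a Palais--Smale condition via profile decomposition plus nonlinear profiles plus stability, and then deduce almost periodicity by reapplying the compactness. At that level of resolution the proposal is correct.

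However, you have misidentified where the work lies. You call the linear profile decomposition at $\dot H^{1/2}_x$ regularity the ``principal obstacle'' and say the inverse Strichartz inequality ``must be established by hand.'' In fact the paper imports this wholesale from Shao (Lemma~\ref{linear profile decomposition}); nothing new is needed there. The genuine issue---which you dismiss as ``standard lines''---is the nonlinear profile decoupling. A generic stability theorem demands smallness of the error in a norm carrying $|\nabla|^{1/2}$, and for $s_c\notin\{0,1\}$ the nonlocality of $|\nabla|^{1/2}$ obstructs proving that the cross terms $F(\sum v_n^j)-\sum F(v_n^j)$ and $F(u_n^J)-F(u_n^J-e^{it\Delta}w_n^J)$ are small in such a space. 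The paper's point (Section~\ref{stability section}, especially Theorem~\ref{stability theorem} and Remark~\ref{talk}) is to build a stability result requiring smallness of the error only in the derivative-free $Y$-norm; the decoupling then follows from elementary pointwise estimates (Lemma~\ref{info about unJ}). If you tried to run your sketch with an off-the-shelf stability theorem, you would hit this wall. So: same architecture, but the load-bearing technical step is the refined stability theory, not the linear profile decomposition.
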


	The reduction to almost periodic solutions has become a well-known and widely used technique in the study of dispersive equations at critical regularity. The existence of such solutions was first established by Keraani \cite{Keraani:L2} in the context of the mass-critical NLS, while Kenig and Merle \cite{kenig merle} were the first to use them to prove a global well-posedness result (in the energy-critical setting). These techniques have since been adapted to a variety of settings (see \cite{hr, KM, KTV, KV, KV5, KV:supercritical, swamprat, TVZ, TVZ2} for some examples in the case of NLS), and the general approach is well-understood.
	
	The argument, which we will carry out in Section \ref{reduction section}, requires three ingredients: (i) a profile decomposition for the linear propagator, (ii) a stability result for the nonlinear equation, and (iii) a decoupling statement for nonlinear profiles. The first profile decompositions established for $e^{it\Delta}$ were adapted to the mass- and energy-critical settings (see  \cite{begout-vargas, carles-keraani, keraani, merle-vega}); the case of non-conserved critical regularity was addressed in \cite{shao}. We will be able to import the profile decomposition we need directly from \cite{shao} (see Section \ref{cc section}). 
	
	Ingredients (ii) and (iii) are closely related, in that the decoupling must be established in a space that is dictated by the stability result. Stability results most often require errors to be small in a space with the scaling-critical number of derivatives (say $s_c$). In \cite{keraani}, Keraani showed how to establish the decoupling in such a space for the energy-critical problem (that is, $s_c=1$). The argument  relies on pointwise estimates and hence is also applicable to the mass-critical problem ($s_c=0$). For $s_c\notin\{0,1\}$, however, the nonlocal nature of $\vert\nabla\vert^{s_c}$ prevents the direct use of this argument.
		
	In certain cases for which $s_c\notin\{0,1\}$ it has nonetheless been possible to adapt the arguments of \cite{keraani} to establish the decoupling in a space with $s_c$ derivatives. Kenig and Merle \cite{KM} were able to succeed in the case $s_c=1/2$, $d=3$ (for which the nonlinearity is cubic) by exploiting the polynomial nature of the nonlinearity and making use of a paraproduct estimate. Killip and Vi\c{s}an \cite{KV:supercritical} handled some cases for which $s_c>1$ by utilizing a square function of Strichartz that shares estimates with $\vert\nabla\vert^{s_c}$. In \cite{swamprat}, some cases were treated for which $s_c\in(0,1)$ (and the nonlinearity is non-polynomial) by making use of the Littlewood--Paley square function and working at the level of individual frequencies.

	In this paper, we take a simpler approach to (ii) and (iii), inspired by the work of Holmer and Roudenko \cite{hr} on the focusing $\dot{H}_x^{1/2}$-critical NLS in $d=3$. It relies on the observation that for $s_c=1/2$, one can develop a stability theory for NLS that only requires errors to be small in a space without derivatives. In Section \ref{stability section}, we do exactly this (see Theorem \ref{stability theorem}). To prove the decoupling in a space without derivatives, we can then rely simply on pointwise estimates and apply the arguments of \cite{keraani} directly (see Lemma \ref{info about unJ}). By proving a more refined stability result, we are thus able to avoid entirely the technical issues related to fractional differentiation described above. In this way, we can greatly simplify the analysis needed to carry out the reduction to almost periodic solutions in our setting.

	Continuing from Theorem \ref{reduction theorem}, we can make some further reductions to the class of solutions that we consider. In particular, we can prove the following
	
			\begin{theorem}\label{further reduction}
If Theorem \ref{scattering theorem} fails, then there exists an almost periodic solution $u:[0,T_{max})\to\C$ to \eqref{nls} with the following properties: 

$(i)$ $u$ blows up forward in time,

$(ii)$ $\inf_{t\in[0,T_{max})}N(t)\geq 1,$

$(iii)$ $\vert x(t)\vert\lesssim_u\int_0^t N(s)\,ds$ for all $t\in[0,T_{max}).$ 	
			\end{theorem}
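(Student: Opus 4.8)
The plan is to start from the almost periodic, doubly-blowing-up solution $u$ produced by Theorem~\ref{reduction theorem} and massage it into one with the three additional properties via a standard rescaling-and-restriction argument. The conclusions $(i)$--$(iii)$ are all scaling- and time-translation-covariant constraints, so none of them costs us the almost periodicity; the point is simply to choose a good representative within the symmetry orbit and a good half of the time interval.

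\textbf{Step 1: Reduce to a one-sided solution.} Since $u$ from Theorem~\ref{reduction theorem} blows up in \emph{both} time directions, in particular it blows up forward in time; restricting $u$ to $[0,T_{max})$ (with $T_{max}=\sup I$) gives an almost periodic solution that blows up forward in time, which is $(i)$. The frequency-scale, spatial-center, and compactness-modulus functions restrict accordingly.

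\textbf{Step 2: Arrange $\inf N(t)\geq 1$.} This is where one uses the freedom in the choice of $N(t)$ together with a rescaling. The key fact (a now-standard lemma in this circle of ideas, appearing e.g.\ in \cite{KV5, KM}) is that for an almost periodic solution that blows up forward in time one must have $\liminf_{t\to T_{max}} N(t)>0$ if $T_{max}<\infty$, while if $T_{max}=\infty$ one can use the local constancy of $N$ together with a subsequence/rescaling argument to replace $u$ by a new almost periodic solution (the ``$N(t)\geq 1$ on $[0,\infty)$'' normalization) that still blows up forward in time; concretely, one partitions $[0,T_{max})$ into intervals on which $N$ is essentially constant, passes to a rescaled limit along characteristic subintervals, and invokes the stability theory (Theorem~\ref{stability theorem}) to see the limit is again a solution of the same type. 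Rescaling by the (bounded) infimum value then yields $(ii)$. I expect this to be the main obstacle, since it requires carefully invoking the local-constancy-of-$N$ property (itself a consequence of almost periodicity plus the local theory) and the stability result to pass to the limit — this is the one genuinely non-cosmetic step.

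\textbf{Step 3: Control the spatial center.} Once $N(t)\geq 1$, property $(iii)$ follows from the local constancy of $N$ and $x$: on each interval where $N$ is comparable to a constant, $x$ moves at a rate controlled by $N$ (again via the local theory / finite speed of ``frequency-localized'' propagation, made precise by a Duhamel-plus-Strichartz estimate on short intervals), so summing these contributions gives $|x(t)-x(0)|\lesssim_u\int_0^t N(s)\,ds$; absorbing the fixed constant $|x(0)|$ into the implicit constant (and noting $\int_0^t N(s)\,ds\gtrsim t$ is bounded below once $t$ is bounded below, while for small $t$ one can simply translate so $x(0)=0$) yields $(iii)$.

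\textbf{Assembly.} Combining Steps 1--3, the solution obtained after the Step~2 rescaling and the Step~3 translation is almost periodic, blows up forward in time, has $\inf_{t\in[0,T_{max})}N(t)\geq 1$, and satisfies $|x(t)|\lesssim_u\int_0^t N(s)\,ds$, which is exactly the assertion of Theorem~\ref{further reduction}. All invoked ingredients — the reduction theorem, the stability theorem, and the local well-posedness theory — are available from the earlier parts of the paper, so the only real work is the limiting argument in Step~2.
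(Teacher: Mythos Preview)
Your proposal is correct and follows essentially the same route as the paper's sketch: start from the doubly-blowing-up almost periodic solution of Theorem~\ref{reduction theorem}, use a rescaling/limiting argument (the paper cites \cite[Theorem~3.3]{TVZ2}) to arrange $(i)$ and $(ii)$, and then use local constancy (Lemma~\ref{local constancy}) to control $x(t)$ and get $(iii)$. The paper makes Step~3 slightly more concrete --- it subdivides $[0,T_{max})$ into characteristic subintervals $I_k$, replaces $N$ by a piecewise constant $N_k$ and $x$ by a piecewise linear function, so that $|\dot{x}(t)|\sim_u N(t)$ a.e., and then integrates after translating to $x(0)=0$ --- but this is exactly the summing-over-intervals argument you describe. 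One organizational note: in the paper the choice of which time half to keep is made \emph{as part of} the rescaling argument (one selects the half on which $N$ does not escape to arbitrarily low frequencies), rather than restricting first and rescaling second; your Steps~1 and~2 should really be done together.
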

	
	Let us briefly sketch the proof of Theorem \ref{further reduction}. Beginning with an almost periodic solution as in Theorem \ref{reduction theorem} and using a rescaling argument (as in \cite[Theorem 3.3]{TVZ2}, for example), one can deduce the existence of an almost periodic blowup solution that does not escape to artibrarily low frequencies on at least half of its maximal lifespan, say $[0,T_{max})$. In this way, we may find an almost periodic solution such that $(i)$ and $(ii)$ hold. 
	
	It remains to see how one can modify an almost periodic solution $u:[0,T_{max})\times\R^d\to\C $ so that $(iii)$ holds. We may always translate $u$ so that $x(0)=0$; thus, it suffices to show that we can modify the modulation parameters of $u$ so that $\vert\dot{x}(t)\vert\sim_u N(t)$ for a.e. $t\in[0,T_{max})$. This will follow from the following local constancy property for the modulation parameters of almost periodic solutions (see \cite[Lemma~5.18]{KV}, for example):

				\begin{lemma}[Local constancy]\label{local constancy}
Let $u:I\times\R^d\to\C$ be a maximal-lifespan almost periodic solution to \eqref{nls}. Then there exists $\delta=\delta(u)>0$ such that if $t_0\in I$, then
					$$[t_0-\delta N(t_0)^{-2},t_0+\delta N(t_0)^{-2}]\subset I,$$
					with
					$$N(t)\sim_u N(t_0),\quad\vert x(t)-x(t_0)\vert\lesssim_u N(t_0)^{-1}\quad
					\text{for}\quad\vert t-t_0\vert\leq \delta N(t_0)^{-2}.$$
			\end{lemma}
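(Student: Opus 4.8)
The plan is to argue by contradiction, leveraging the compactness built into the definition of almost periodicity together with the local theory from Theorem \ref{standard lwp}. First I would observe that, by rescaling, it suffices to treat the case $N(t_0) = 1$; indeed, the map $u(t,x) \mapsto \lambda^{\frac{d-1}{2}} u(\lambda^2 t, \lambda x)$ is a symmetry of \eqref{nls} that sends the frequency scale function $N(\cdot)$ to $N(\lambda^2 \cdot)/\lambda$, so we may normalize $N(t_0)=1$ and hence prove that there is an absolute (well, $u$-dependent) $\delta>0$ with $[t_0-\delta, t_0+\delta]\subset I$, $N(t)\sim_u 1$, and $|x(t)-x(t_0)|\lesssim_u 1$ on this interval.

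For the claim $N(t)\sim_u N(t_0)$, I would suppose it fails: then there exist $t_n, t_n'$ with $|t_n - t_n'|\to 0$ (after rescaling each so $N(t_n')=1$) but $N(t_n)/N(t_n') \to 0$ or $\to\infty$. The key tool is that the orbit $\{u(t): t\in I\}$, after renormalizing by the symmetry group, is precompact in $\dot H_x^{1/2}$ — this is exactly what almost periodicity encodes (Remark \ref{arzela ascoli} is a manifestation of this). Passing to a subsequence, the renormalized data at times $t_n'$ converge in $\dot H^{1/2}_x$ to some nonzero $\phi$. By the local well-posedness and stability theory (Theorem \ref{standard lwp}, Theorem \ref{stability theorem}), the solution with data $\phi$ exists on a fixed time interval $[-\delta_0,\delta_0]$ with controlled scattering size, and by stability the renormalized solutions $u_n$ also exist on $[-\delta_0,\delta_0]$ with $S_{[-\delta_0,\delta_0]}(u_n)$ bounded, uniformly in $n$. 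But then the frequency-localization bound in Definition \ref{almost periodic definition}, combined with the fact that on a time interval of controlled scattering size the frequency scale cannot vary much (one shows via Strichartz and the Duhamel formula that the bulk of the $\dot H^{1/2}$-norm stays at comparable frequencies throughout such an interval), forces $N(t_n)/N(t_n')$ to stay bounded above and below — a contradiction. The same compactness-and-stability package gives the spatial bound $|x(t)-x(t_0)|\lesssim_u N(t_0)^{-1}$: were it to fail along a sequence, the spatial centers would have to move a distance $\gg N(t_0)^{-1}$ in time $\lesssim N(t_0)^{-2}$, contradicting the fact that a bump of the solution of mass $\gtrsim_u 1$ (Remark \ref{arzela ascoli}) cannot physically translate that far in that short a time without the scattering size being large, which is excluded on the interval just constructed. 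Finally, $[t_0-\delta N(t_0)^{-2}, t_0+\delta N(t_0)^{-2}]\subset I$ follows because the blowup criterion in Theorem \ref{standard lwp} says the solution can only cease to exist if its scattering size diverges, and we have just shown the scattering size is bounded on that interval.

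The main obstacle I anticipate is making rigorous the link between "bounded scattering size on a time interval" and "the frequency scale $N(t)$ does not change by more than a bounded factor on that interval." The clean way to handle this is: (a) use the Duhamel formula together with Strichartz estimates to show that on any interval $J$ with $S_J(u)$ small, $\|u(t) - e^{i(t-t_0)\Delta}u(t_0)\|_{\dot H^{1/2}_x}$ is small; (b) note $e^{i(t-t_0)\Delta}$ commutes with Littlewood–Paley projections and, on a short interval, barely moves the frequency profile; (c) combine with the two localization integrals in Definition \ref{almost periodic definition} — the physical-space one and the frequency one — to pin down $N(t)$. One then subdivides the interval $[-\delta_0,\delta_0]$ into finitely many pieces on each of which $S$ is small (possible since the total is bounded), and propagates the estimate. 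This bootstrapping over subintervals, rather than any single clever inequality, is where the real work lies; everything else is a standard concentration-compactness contradiction argument of the type pioneered in \cite{Keraani:L2, kenig merle} and codified in \cite{KV}.
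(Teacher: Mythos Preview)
The paper does not supply its own proof of this lemma; it simply cites \cite[Lemma~5.18]{KV}. Your proposal is essentially the argument found there: exploit precompactness of the modulated orbit to extract a convergent subsequence of (rescaled, recentered) data, use local well-posedness plus stability (Theorem~\ref{stability theorem}) to obtain a uniform existence interval with bounded scattering size, and invoke the blowup criterion to conclude $[t_0-\delta N(t_0)^{-2},t_0+\delta N(t_0)^{-2}]\subset I$.

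Two small points. First, $e^{i(t-t_0)\Delta}$ is a unimodular Fourier multiplier, so it preserves the frequency profile \emph{exactly}, not merely ``barely'' --- your step (b) is therefore free on the frequency side (though not on the spatial side). Second, the route you outline in (a)--(c) for pinning down $N(t)$ can be made to work, but the version in \cite{KV} is cleaner and avoids the subinterval bootstrap: after rescaling so that $N(t_n)=1$, $x(t_n)=0$, $t_n=0$, stability gives $u_n\to v$ in $C_t\dot H_x^{1/2}$ on a fixed interval, hence $u_n(t_n')\to v(0)=v_0\neq 0$ strongly in $\dot H_x^{1/2}$. If $N_n(t_n')\to 0$ or $\infty$, the frequency localization in Definition~\ref{almost periodic definition} forces $u_n(t_n')\rightharpoonup 0$ weakly, contradicting strong convergence to a nonzero limit. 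The spatial bound is handled the same way using the physical-space localization. This is morally what you wrote, just without the need to propagate estimates across subintervals; your identified ``main obstacle'' largely dissolves once you argue via the limiting solution $v$ rather than tracking the nonlinear evolution of frequency support directly.
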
	
	
	Using Lemma \ref{local constancy}, we may subdivide $[0,T_{max})$ into \emph{characteristic subintervals} $I_k$ and set $N(t)$ to be constant and equal to some $N_k$ on each $I_k$. Note that $\vert I_k\vert\sim_u N_k^{-2}$ and that this requires us to modify the compactness modulus function by a (time-independent) multiplicative factor. We may then modify $x(t)$ by $O(N(t)^{-1})$ so that $x(t)$ becomes piecewise linear on each $I_k$, with $\vert\dot{x}(t)\vert\sim_u N(t)$ for $t\in I_k^\circ$. Thus, we get $\vert\dot{x}(t)\vert\sim_u N(t)$ for a.e. $t\in [0,T_{max}),$ as desired. 		
	To complete the proof of Theorem \ref{scattering theorem}, it therefore suffices to rule out the existence of the almost periodic solutions described in Theorem \ref{further reduction}.
		
	In Section \ref{finite section}, we preclude the possibility of finite time blowup (i.e. $T_{max}<\infty$). To do this, we make use of the following `reduced' Duhamel formula for almost periodic solutions, which one can prove by adapting the argument in \cite[Proposition 5.23]{KV}:
							\begin{proposition}		
							[Reduced Duhamel formula]
							\label{reduced duhamel proposition}
Let $u:[0,T_{max})\times\R^d\to\C$ be an almost periodic solution to \eqref{nls}. Then for all $t\in[0,T_{max})$, we have
							$$
						u(t)=i\lim_{T\to T_{max}}\int_t^T e^{i(t-s)\Delta}(\vert u\vert^{\frac{4}{d-1}}u)(s)\,ds,
							$$
where the limits are taken in the weak $\dot{H}_x^{1/2}$ topology. 							
							\end{proposition}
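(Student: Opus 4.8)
My plan is to derive the reduced formula from the ordinary Duhamel formula together with a weak-convergence analysis of the free evolution of $u(T)$ as $T\to T_{max}$. Applying the Duhamel formula of Definition \ref{solution definition} on $[t,T]\subset[0,T_{max})$ yields
$$u(t)=e^{i(t-T)\Delta}u(T)+i\int_t^T e^{i(t-s)\Delta}\big(\vert u\vert^{\frac{4}{d-1}}u\big)(s)\,ds,$$
so everything reduces to showing $e^{i(t-T)\Delta}u(T)\rightharpoonup0$ in $\dot{H}_x^{1/2}$ as $T\nearrow T_{max}$. Since $u\in L_t^\infty\dot{H}_x^{1/2}$ and $e^{i(t-T)\Delta}$ is unitary on $\dot{H}_x^{1/2}$, this family is bounded in $\dot{H}_x^{1/2}$; hence it suffices to show that its only weak subsequential limit is $0$. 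Fix then $T_n\nearrow T_{max}$ with $e^{i(t-T_n)\Delta}u(T_n)\rightharpoonup f$, and fix a test function $\phi$ with $\wh\phi\in C_c^\infty(\R^d\setminus\{0\})$; since such $\phi$ are dense in $\dot{H}_x^{1/2}$, it is enough to prove $\ip{f}{\phi}_{\dot{H}_x^{1/2}}=0$. Passing to a further subsequence, assume $N(T_n)\to N_\infty\in[0,\infty]$.

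If $N_\infty=0$, then for each fixed $\eta>0$ the support of $\wh\phi$ eventually avoids $\{\vert\xi\vert\leq C(\eta)N(T_n)\}$, so, denoting by $P_{>R}$ the sharp frequency cutoff to $\{\vert\xi\vert>R\}$ (which commutes with $e^{i(t-T_n)\Delta}$) and using the frequency half of Definition \ref{almost periodic definition} together with Plancherel,
$$\big\vert\ip{e^{i(t-T_n)\Delta}u(T_n)}{\phi}_{\dot{H}_x^{1/2}}\big\vert=\big\vert\ip{e^{i(t-T_n)\Delta}P_{>C(\eta)N(T_n)}u(T_n)}{\phi}_{\dot{H}_x^{1/2}}\big\vert\leq\sqrt\eta\,\norm{\phi}_{\dot{H}_x^{1/2}}.$$
Letting $n\to\infty$ and then $\eta\to0$ gives $\ip{f}{\phi}_{\dot{H}_x^{1/2}}=0$.

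If $N_\infty>0$, I would instead write $\psi:=\nsc\phi\in L_x^1\cap L_x^2$ (note $\wh\psi=\vert\xi\vert^{1/2}\wh\phi\in C_c^\infty(\R^d\setminus\{0\})$) and $v_n:=\nsc u(T_n)$, so that $\ip{e^{i(t-T_n)\Delta}u(T_n)}{\phi}_{\dot{H}_x^{1/2}}=\ip{v_n}{e^{-i(t-T_n)\Delta}\psi}_{L_x^2}$. Splitting $v_n=v_n\mathbf{1}_{B_n}+v_n\mathbf{1}_{B_n^c}$ with $B_n:=\{\vert x-x(T_n)\vert\leq C(\eta)/N(T_n)\}$, the spatial half of Definition \ref{almost periodic definition} handles $v_n\mathbf{1}_{B_n^c}$ (it has $L_x^2$-norm $\leq\sqrt\eta$), while for $v_n\mathbf{1}_{B_n}$ one uses Cauchy--Schwarz on $B_n$ together with the dispersive estimate $\norm{e^{-i(t-T_n)\Delta}\psi}_{L_x^\infty}\lesssim\vert t-T_n\vert^{-d/2}\norm{\psi}_{L_x^1}$ to bound it by $\lesssim_u\big(C(\eta)/(N(T_n)\vert t-T_n\vert)\big)^{d/2}\norm{\psi}_{L_x^1}$. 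This tends to $0$ for each fixed $\eta$: if $T_{max}=\infty$ then $\vert t-T_n\vert\to\infty$ while $N(T_n)$ stays bounded below; and if $T_{max}<\infty$ then Lemma \ref{local constancy} forces $N(s)\gtrsim_u(T_{max}-s)^{-1/2}$, so $N(T_n)\to\infty$ while $\vert t-T_n\vert\to T_{max}-t>0$. Letting $n\to\infty$ and then $\eta\to0$ again gives $\ip{f}{\phi}_{\dot{H}_x^{1/2}}=0$. In either case $f=0$, as required.

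The argument is largely bookkeeping; the only input beyond Definition \ref{almost periodic definition} and the free dispersive estimate is the fact (read off from Lemma \ref{local constancy}) that finite-time blowup forces $N(t)\to\infty$, which is exactly what keeps the dispersive gain $\big(C(\eta)/(N(T_n)\vert t-T_n\vert)\big)^{d/2}$ from degenerating when $\vert t-T_n\vert$ stays bounded. The point requiring care is the dichotomy itself: when the frequency scale collapses ($N_\infty=0$) the physical-space/dispersive estimate is useless and one must exploit the frequency-localization half of almost periodicity, whereas when $N_\infty>0$ the roles are reversed. (If $u\equiv0$ both sides of the asserted identity vanish, so there is nothing to prove.)
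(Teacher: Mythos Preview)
Your argument is correct and follows the standard route that the paper defers to (the paper does not give its own proof but cites \cite[Proposition~5.23]{KV}, whose argument is precisely this: reduce via the Duhamel formula to showing $e^{i(t-T)\Delta}u(T)\rightharpoonup 0$ in $\dot H^{1/2}_x$, then test against Schwartz functions with compact Fourier support away from the origin and exploit the two halves of almost periodicity together with the dispersive estimate, using Lemma~\ref{local constancy} to force $N(T)\to\infty$ when $T_{max}<\infty$). Your case split $N_\infty=0$ versus $N_\infty>0$ and the corresponding frequency/physical-space dichotomy match the cited proof essentially verbatim.
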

	Using Proposition \ref{reduced duhamel proposition}, Strichartz estimates, and conservation of mass, we can show that an almost periodic solution that blows up in finite time must have zero mass, contradicting the fact that the solution blows up in the first place.

	In Section \ref{lin--strauss section}, we use the Lin--Strauss Morawetz inequality to rule out the remaining case, $T_{max}=\infty$. This estimate tells us that solutions to NLS that are bounded in $\dot{H}_x^{1/2}$ cannot remain concentrated near the origin for too long. However, almost periodic solutions to \eqref{nls} as in Theorem \ref{further reduction} with $T_{max}=\infty$ do essentially this; thus we can reach a contradiction in this case.  

\subsection*{Acknowledgements} I am grateful to my advisors Rowan Killip and Monica Vi\c{s}an for helpful discussions and for a careful reading of the manuscript. This work was supported in part by NSF grant DMS-1001531 (P.I. Rowan Killip). 
								
									\section{Notation and useful lemmas}
									\subsection{Some notation}
	We write $X\lesssim Y$ or $Y\gtrsim X$ whenever $X\leq CY$ for some $C=C(d)>0$. If $X\lesssim Y\lesssim X$, we write $X\sim Y$. Dependence on additional parameters will be indicated with subscripts, for example, $X\lesssim_u Y$.
									
	For a spacetime slab $I\times\R^d$, we define
				$$\xnorm{u}{q}{r}{I}:=\norm{\,\norm{u(t)}_{L_x^r(\R^d)}\,}_{L_t^q(I)}.$$ 
If $q=r$, we write $L_t^{q}L_x^q=L_{t,x}^q.$ We also sometimes write $\norm{f}_{L_x^r(\R^d)}=\norm{f}_{L_x^r}$. 

	We define the Fourier transform on $\R^d$ by
				$$\wh{f}(\xi):=(2\pi)^{-d/2}\int_{\R^d} e^{-ix\cdot \xi}f(x)\,dx.$$
For $s>-d/2$, we then define the fractional differentiation operator $\vert\nabla\vert^s$ and the homogeneous Sobolev norm  via $\wh{\vert\nabla\vert^s f}(\xi):=\vert\xi\vert^s\wh{f}(\xi)$ and $\norm{f}_{\dot{H}_x^{s}(\R^d)}:=\norm{\vert\nabla\vert^s f}_{L_x^2(\R^d)}.$
							\subsection{Basic harmonic analysis}
	Let $\varphi$ be a radial bump function supported in the ball $\{\xi\in\R^d:\vert \xi\vert\leq\tfrac{11}{10}\}$ and equal to 1 on the ball $\{\xi\in\R^d:\vert\xi\vert\leq 1\}.$ For $N\in 2^{\mathbb{Z}}$, we define the Littlewood--Paley projection operators via
					\begin{align*}
					&\wh{P_{\leq N} f}(\xi):=\varphi(\tfrac{\xi}{N})\wh{f}(\xi),
					\quad \wh{P_N f}(\xi):=\big(\varphi(\tfrac{\xi}{N})-\varphi(\tfrac{2\xi}{N})\big)\wh{f}(\xi),
					\quad P_{>N}:=\text{Id}-P_{\leq N}.
					\end{align*}
								
We note that these operators commute with $e^{it\Delta}$ and all differential operators, as they are Fourier multiplier operators. They also obey the following
						\begin{lemma}
						[Bernstein estimates] 
For $1\leq r\leq q\leq\infty$ and $s\geq 0$,
				\begin{align*}
 			\norm{P_{> N}f}_{L_x^r(\R^d)}\lesssim N^{-s}\norm{\vert\nabla\vert^sf}_{L_x^r(\R^d)},
			\quad \norm{P_{N}f}_{L_x^q(\R^d)}\lesssim N^{\frac{d}{r}-\frac{d}{q}}\norm{f}_{L_x^r(\R^d)}.
				\end{align*}
						\end{lemma}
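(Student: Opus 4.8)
The plan is to derive both inequalities from a single principle: each Littlewood--Paley piece acts by convolution against an $L^1$-dilate of a fixed Schwartz function, so the entire argument can be routed through Young's convolution inequality rather than through any Fourier-multiplier theorem. This is the point that will let us reach the endpoint exponents $r\in\{1,\infty\}$, for which Mikhlin--H\"ormander theory is unavailable.

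First I would treat the second estimate. Set $\psi(\xi):=\varphi(\xi)-\varphi(2\xi)$; this is a smooth function supported in an annulus $\{\tfrac12\leq\vert\xi\vert\leq\tfrac{11}{10}\}$, and $P_N$ is convolution with the kernel $K_N(x)=cN^d\check\psi(Nx)$ for a fixed constant $c$, where $\check\psi$ denotes the inverse Fourier transform of $\psi$. Given $1\leq r\leq q\leq\infty$, I would choose $p\in[1,\infty]$ by $\tfrac1p=1-(\tfrac1r-\tfrac1q)$ — admissible since $0\leq\tfrac1r-\tfrac1q\leq1$ — and apply Young's inequality to get $\norm{P_Nf}_{L_x^q}\leq\norm{K_N}_{L_x^p}\norm{f}_{L_x^r}$. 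A change of variables gives $\norm{K_N}_{L_x^p}\sim N^{d(1-1/p)}\norm{\check\psi}_{L_x^p}=N^{d(\frac1r-\frac1q)}\norm{\check\psi}_{L_x^p}$, and since $\psi$ is Schwartz its inverse transform lies in every $L_x^p(\R^d)$ with norm depending only on $d$; this is exactly the asserted bound.

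Next I would handle the first estimate. When $s=0$ it is merely the statement that $P_{>N}=\mathrm{Id}-P_{\leq N}$ is bounded on $L_x^r(\R^d)$ uniformly in $N$, which holds because $P_{\leq N}$ is convolution with $N^d\check\varphi(N\cdot)$ and $\norm{N^d\check\varphi(N\cdot)}_{L_x^1}=\norm{\check\varphi}_{L_x^1}\lesssim1$. For $s>0$ I would decompose $P_{>N}=\sum_{M\in2^{\mathbb Z},\,M>N}P_M$ and note that $P_M\vert\nabla\vert^{-s}$ has Fourier multiplier $\psi(\xi/M)\vert\xi\vert^{-s}=M^{-s}\,\theta(\xi/M)$ with $\theta(\eta):=\psi(\eta)\vert\eta\vert^{-s}$; since $\psi$ vanishes near the origin, $\theta$ is again Schwartz, so $P_M\vert\nabla\vert^{-s}$ is convolution with $M^{-s}M^d\check\theta(M\cdot)$, of $L_x^1$-norm $\lesssim M^{-s}$. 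Young's inequality then gives $\norm{P_M\vert\nabla\vert^{-s}g}_{L_x^r}\lesssim M^{-s}\norm{g}_{L_x^r}$ for every $1\leq r\leq\infty$; summing the geometric series $\sum_{M>N}M^{-s}\lesssim_s N^{-s}$ (this is where $s>0$ is used) and applying the triangle inequality with $g=\vert\nabla\vert^sf$ — so that $P_M\vert\nabla\vert^{-s}g=P_Mf$ — completes the proof.

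I do not anticipate a genuine obstacle; the content is entirely standard. The only step requiring any care is precisely the insistence on the endpoints $r=1,\infty$: one must keep every convolution kernel absolutely integrable and argue purely by Young's inequality, rather than invoking boundedness of the relevant multipliers on $L_x^r$, which would fail at the endpoints.
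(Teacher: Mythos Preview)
Your proof is correct and follows the standard route via Young's convolution inequality, handling the endpoint exponents carefully. The paper itself does not supply a proof of this lemma; it is stated as a standard fact from Littlewood--Paley theory, so there is nothing to compare against.
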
									
	We will also need some fractional calculus estimates.						
						\begin{lemma}[Fractional chain rule, \cite{CW}]	
Suppose $G\in C^1(\C)$ and $s\in(0,1]$. Let $1<r,r_1<\infty,$ $1<r_2\leq\infty$ be such that $\tfrac{1}{r}=\tfrac{1}{r_1}+\tfrac{1}{r_2}.$ Then 
			$$\norm{\vert\nabla\vert^s G(u)}_{L_x^r}\lesssim 
			\norm{G'(u)}_{L_x^{r_1}}\norm{\vert\nabla\vert^s u}_{L_x^{r_2}}.$$	
						\end{lemma}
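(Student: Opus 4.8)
The plan is to reduce the lemma to a pointwise estimate between difference‑square functions. The endpoint $s=1$ is the classical chain rule combined with H\"older's inequality (and the $L_x^r$‑boundedness of the Riesz transforms), so I will assume $s\in(0,1)$. For such $s$ and any $1<\rho<\infty$ I would invoke the standard characterization of the $\ntw{s}$‑Sobolev norm by the Stein square function,
$$\norm{\ntw{s}f}_{L_x^\rho}\sim\bnorm{\Big(\int_{\R^d}\frac{|f(x+y)-f(x)|^2}{|y|^{d+2s}}\,dy\Big)^{1/2}}_{L_x^\rho}=:\norm{D_sf}_{L_x^\rho}.$$
Applying this with $f=G(u)$ and $\rho=r$, the lemma reduces to the pointwise bound $D_s\big(G(u)\big)(x)\lesssim\mathcal M(x)\,D_su(x)$ for some $\mathcal M$ satisfying $\norm{\mathcal M}_{L_x^{r_1}}\lesssim\norm{G'(u)}_{L_x^{r_1}}$: indeed, H\"older's inequality in $x$ with exponents $\tfrac{r_1}{r}$ and $\tfrac{r_2}{r}$ (legitimate since $\tfrac1r=\tfrac1{r_1}+\tfrac1{r_2}$ forces $r<r_1$ and $r<r_2$), followed by the characterization applied to $u$ with $\rho=r_2$, then finishes the proof; the case $r_2=\infty$ is handled directly, without the last step.

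For the pointwise bound I would start from the fundamental theorem of calculus, which gives
$$\big|G(u(x+y))-G(u(x))\big|\le|u(x+y)-u(x)|\int_0^1\big|G'\big((1-\theta)u(x)+\theta u(x+y)\big)\big|\,d\theta.$$
The crux is to dominate the $\theta$‑integral by a quantity lying in $L_x^{r_1}$ without costing regularity. For the power‑type nonlinearity $G(u)=|u|^{\frac{4}{d-1}}u$ relevant here one has $|G'(z)|\sim|z|^{\frac{4}{d-1}}$, so convexity yields $\big|G'((1-\theta)u(x)+\theta u(x+y))\big|\lesssim|G'(u(x))|+|G'(u(x+y))|$ uniformly in $\theta\in[0,1]$ (for a general $G\in C^1$ one appeals to \cite{CW}). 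Squaring and integrating in $y$ then splits $D_s(G(u))(x)^2$ into $|G'(u(x))|^2D_su(x)^2$, which already has the desired form with $\mathcal M(x)=|G'(u(x))|$, plus a remainder $\int_{\R^d}|G'(u(x+y))|^2|u(x+y)-u(x)|^2|y|^{-d-2s}\,dy$.

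Controlling this remainder is the step I expect to be the main obstacle. After the change of variables $z=x+y$ it reads $\int_{\R^d}|G'(u(z))|^2|u(z)-u(x)|^2|z-x|^{-d-2s}\,dz$, and I would estimate it by decomposing dyadically in $|z-x|$: on annuli close to $x$ one retains the difference $|u(z)-u(x)|$ and feeds it into $D_su(x)$, while on the far annuli one uses the triangle inequality together with crude averaging bounds, the decay exponent $d+2s>d$ ensuring the dyadic sum converges. The outcome is a bound by $\big[\mathcal N(|G'(u)|)(x)\big]^2D_su(x)^2$ for a Hardy--Littlewood‑type maximal operator $\mathcal N$; since $r_1>1$, $\mathcal N$ is bounded on $L_x^{r_1}$, so taking $\mathcal M\lesssim|G'(u)|+\mathcal N(|G'(u)|)$ closes the argument. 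This maximal‑function/dyadic bookkeeping for the term with $G'$ evaluated at the translate $u(x+y)$ — and, in the fully general $C^1$ case, the replacement of the convexity bound above — is the only genuinely technical point; the rest is H\"older's inequality and the Sobolev‑norm characterization.
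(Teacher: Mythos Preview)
The paper does not prove this lemma; it is simply quoted from Christ--Weinstein \cite{CW} and invoked as a black box. There is thus no in-paper argument to compare against.

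On the substance of your proposal: the square-function characterization plus the fundamental theorem of calculus is a legitimate route, and you have correctly isolated the crux in the remainder carrying $G'(u(x+y))$. But the dyadic sketch you give for that remainder does not deliver the pointwise bound $R(x)\lesssim\mathcal N(|G'(u)|)(x)^2\,D_su(x)^2$ that you claim. On the far annuli, replacing $|u(z)-u(x)|$ by $|u(z)|+|u(x)|$ and averaging produces terms of the form $|u(x)|^2 M(|G'(u)|^2)(x)$ and $M(|G'(u)|^2|u|^2)(x)$, neither of which involves $D_su(x)$: you have traded the fractional derivative of $u$ for undifferentiated $|u|$, which is the wrong currency and cannot be converted back. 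On the near annuli there is no mechanism to extract $|G'(u(z))|$ as an \emph{average} (hence as a maximal function of $x$) while leaving the full weight $|u(z)-u(x)|^2|z-x|^{-d-2s}$ intact to rebuild $D_su(x)^2$. A pointwise inequality of the shape you assert is in fact not available in general; the known arguments either abandon the pointwise bound and estimate $\norm{R^{1/2}}_{L_x^r}$ directly, or---as in \cite{CW}---work with Littlewood--Paley projections from the outset, where the analogue of your remainder is handled via the mean-zero bound $|P_Nf(x)|\lesssim N^{-1}M(|\nabla f|)(x)$ and its fractional variants.
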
								
						\begin{lemma}[Derivatives of differences, \cite{KV NLW2}]
						\label{derivatives of differences}
						Let $F(u)=\vert u\vert^p u$ for some $p>0$ and let $0<s<1$. For $1<r,r_1,r_2<\infty$ such that $\tfrac{1}{r}=\tfrac{1}{r_1}+\tfrac{p}{r_2},$ we have
						$$\norm{\vert\nabla\vert^s[F(u+v)-F(u)]}_{L_x^r}
						\lesssim\norm{\vert\nabla\vert^s u}_{L_x^{r_1}}\norm{v}_{L_x^{r_2}}^p
						+\norm{\vert\nabla\vert^s v}_{L_x^{r_1}}\norm{u+v}_{L_x^{r_2}}^p.$$
						\end{lemma}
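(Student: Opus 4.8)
The plan is to write the difference as an integral of derivatives of $F$ along the segment from $u$ to $u+v$, and then to combine a fractional product rule with a fractional chain rule. Write $F(z)=|z|^p z$ and let $F_z,F_{\bar z}$ be its Wirtinger derivatives; for this $F$ one computes $F_z(z)=\tfrac{p+2}{2}|z|^p$ and $F_{\bar z}(z)=\tfrac p2|z|^{p-2}z^2$, so that $|F_z(z)|+|F_{\bar z}(z)|\lesssim|z|^p$ and, away from the origin, both are homogeneous of degree $p$ with derivatives of size $|z|^{p-1}$. The fundamental theorem of calculus then gives
$$F(u+v)-F(u)=\int_0^1\big[F_z(u+\theta v)\,v+F_{\bar z}(u+\theta v)\,\bar v\big]\,d\theta,$$
so it suffices to bound $\big\|\,|\nabla|^s[G(u+\theta v)\,w]\,\big\|_{L_x^r}$ uniformly in $\theta\in[0,1]$, where $w\in\{v,\bar v\}$ and $G\in\{F_z,F_{\bar z}\}$.

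For fixed $\theta$ I would first apply the fractional product rule to distribute $|\nabla|^s$ across $G(u+\theta v)\cdot w$, choosing the Hölder exponents according to the identity $\tfrac1r=\tfrac1{r_1}+\tfrac p{r_2}$. The term with the derivative falling on $w$ is then controlled by $\|G(u+\theta v)\|_{L_x^{r_2/p}}\,\|\,|\nabla|^s v\,\|_{L_x^{r_1}}\lesssim\|u+\theta v\|_{L_x^{r_2}}^p\,\|\,|\nabla|^s v\,\|_{L_x^{r_1}}$, and the term with the derivative on $G(u+\theta v)$ is controlled by $\big\|\,|\nabla|^s G(u+\theta v)\,\big\|_{L_x^{\rho}}\,\|v\|_{L_x^{r_2}}$ with $\tfrac1\rho=\tfrac1{r_1}+\tfrac{p-1}{r_2}$. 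For the latter I would invoke the fractional chain rule quoted above (using $|G'(z)|\lesssim|z|^{p-1}$), obtaining $\|u+\theta v\|_{L_x^{r_2}}^{p-1}\,\|\,|\nabla|^s(u+\theta v)\,\|_{L_x^{r_1}}\lesssim\|u+\theta v\|_{L_x^{r_2}}^{p-1}\big(\|\,|\nabla|^s u\,\|_{L_x^{r_1}}+\|\,|\nabla|^s v\,\|_{L_x^{r_1}}\big)$. Integrating in $\theta$ and using the convexity bound $\|u+\theta v\|_{L_x^{r_2}}\le\|u\|_{L_x^{r_2}}+\|v\|_{L_x^{r_2}}$ together with $\|u\|_{L_x^{r_2}}\le\|u+v\|_{L_x^{r_2}}+\|v\|_{L_x^{r_2}}$, one expands the resulting monomials and reorganizes them — via the triangle inequality applied to $\dot H^s$ and $L^{r_2}$ norms of $u$, $v$, $u+v$ — into the two terms on the right-hand side of the claim.

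The main obstacle is the range $0<p<1$, which is precisely the relevant one here since $p=\tfrac4{d-1}<1$ for every $d\ge6$: in that range $F_z$ and $F_{\bar z}$ are only Hölder continuous of order $p$ rather than $C^1$, so the fractional chain rule invoked in the third step does not apply directly. The remedy is to replace that step by a more hands-on argument, combining the pointwise estimate $|G(a)-G(b)|\lesssim|a-b|^{p}$ (valid for $p\le 1$) with a Littlewood--Paley decomposition and Bernstein's inequality to estimate $|\nabla|^s$ of the low-regularity nonlinearity, at the cost of additional bookkeeping to recover the stated exponents. Since the statement is in any case established in \cite{KV NLW2}, one may alternatively simply cite it.
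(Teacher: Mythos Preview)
The paper does not prove this lemma at all; it simply quotes it from \cite{KV NLW2}. So your closing remark --- that one may just cite the result --- is precisely what the paper does, and for the purposes of this note that is the complete ``proof.''

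That said, the argument you sketch before that remark has a genuine gap, and it is not the $p<1$ issue you flag. The problem is the reorganization step. After the fundamental theorem of calculus, the fractional product rule, and the chain rule, you are left with terms of the form
\[
\|\,|\nabla|^s v\,\|_{L^{r_1}}\,\|u+\theta v\|_{L^{r_2}}^{p}
\quad\text{and}\quad
\|\,|\nabla|^s(u+\theta v)\,\|_{L^{r_1}}\,\|u+\theta v\|_{L^{r_2}}^{p-1}\,\|v\|_{L^{r_2}},
\]
uniformly in $\theta\in[0,1]$. At $\theta=0$ the first of these is $\|\,|\nabla|^s v\,\|_{L^{r_1}}\|u\|_{L^{r_2}}^{p}$, and no triangle-inequality manipulation of the norms of $u$, $v$, $u+v$ will turn this into a combination of $\|\,|\nabla|^s u\,\|_{L^{r_1}}\|v\|_{L^{r_2}}^{p}$ and $\|\,|\nabla|^s v\,\|_{L^{r_1}}\|u+v\|_{L^{r_2}}^{p}$. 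Concretely: take $u$ supported at low frequency with $\|u\|_{L^{r_2}}\sim 1$ and $\|\,|\nabla|^s u\,\|_{L^{r_1}}=\delta$ small, and choose $v$ so that $u+v$ is supported at high frequency with $\|u+v\|_{L^{r_2}}=\varepsilon$ small but $\|\,|\nabla|^s v\,\|_{L^{r_1}}\sim\|\,|\nabla|^s(u+v)\,\|_{L^{r_1}}=K$ large. Then the lemma's right-hand side is $\sim\delta+K\varepsilon^{p}$, while your approach produces a contribution of size $\sim K$ from the $\theta\approx 0$ region. The asymmetry of the stated estimate --- that $|\nabla|^s u$ is paired only with $\|v\|^{p}$ and $|\nabla|^s v$ only with $\|u+v\|^{p}$ --- is exactly what the application \eqref{n to x diff} needs, and it is not what the path-integral method delivers.

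The proof in \cite{KV NLW2} obtains this asymmetric structure by a Littlewood--Paley/paraproduct decomposition rather than by integrating along the segment from $u$ to $u+v$; that decomposition is also what handles the H\"older (rather than $C^1$) regularity of $F_z$, $F_{\bar z}$ when $p<1$, so the two difficulties are resolved simultaneously.
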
									

								\subsection{Strichartz estimates}
	Let $e^{it\Delta}$ be the free Schr\"odinger propagator:
		$$[e^{it\Delta}f](x)=(4\pi i t)^{-d/2}\int_{\R^d} e^{i\vert x-y\vert^2/4t}f(y)\,dy\quad\text{for }t\neq 0.$$

	For $d\geq 3$, we call a pair of exponents $(q,r)$ \emph{Schr\"odinger admissible} if $2\leq q,r\leq\infty$ and $\tfrac{2}{q}+\tfrac{d}{r}=\tfrac{d}{2}$. For a time interval $I$ and $s\geq 0$, we define the Strichartz space $\dot{S}^s(I)$ via the norm 
		$$\norm{u}_{\dot{S}^{s}(I)}=\sup\big\{\xnorm{\vert\nabla\vert^s u}{q}{r}{I}:(q,r)\text{ Schr\"odinger admissible}\big\}.$$ 
	We will make frequent use of the following standard estimates for $e^{it\Delta}$:
						\begin{lemma}
						[Strichartz estimates, \cite{ginibre velo smoothing, keel tao, strichartz}]
Let $d\geq 3$, $s\geq 0$ and let $I$ be a compact time interval. Let $u:I\times\R^d\to\C$ be a solution to the forced Schr\"odinger equation $(i\partial_t+\Delta)u=F$. Then for any $t_0\in I$, we have
			$$\norm{u}_{\dot{S}^s(I)}\lesssim\norm{u(t_0)}_{\dot{H}_x^s(\R^d)}+\min\bigg\{\xnorm{\vert\nabla\vert^s F}{2}{\frac{2d}{d+2}}{I},
			\xnorms{\vert\nabla\vert^s F}{\frac{2(d+2)}{d+4}}{I}\bigg\}.$$ 
						\end{lemma}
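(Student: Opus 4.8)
The plan is to deduce the estimate from the Duhamel representation
$$u(t)=e^{i(t-t_0)\Delta}u(t_0)-i\int_{t_0}^t e^{i(t-s)\Delta}F(s)\,ds,$$
so that it suffices to prove (a) the \emph{homogeneous} estimate $\norm{e^{it\Delta}g}_{\dot{S}^0(I)}\lesssim\norm{g}_{L_x^2(\R^d)}$ for every admissible pair, and (b) the \emph{inhomogeneous} estimate controlling $\big\|\int_{t_0}^t e^{i(t-s)\Delta}F(s)\,ds\big\|_{\dot{S}^0(I)}$ by each of the two quantities inside the $\min$. Since $\vert\nabla\vert^s$ is a Fourier multiplier it commutes with $e^{it\Delta}$, with the equation $(i\partial_t+\Delta)u=F$, and with the Littlewood--Paley projections, so applying $\vert\nabla\vert^s$ throughout upgrades the $s=0$ statement to the general one; hence I may take $s=0$. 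The starting point is the dispersive estimate $\norm{e^{it\Delta}g}_{L_x^\infty}\lesssim\vert t\vert^{-d/2}\norm{g}_{L_x^1}$, immediate from the kernel formula written above; interpolating this against the unitarity of $e^{it\Delta}$ on $L_x^2$ yields $\norm{e^{it\Delta}g}_{L_x^r}\lesssim\vert t\vert^{-d(\frac12-\frac1r)}\norm{g}_{L_x^{r'}}$ for $2\le r\le\infty$.

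For (a) I would run the $TT^*$ argument: the homogeneous estimate for an admissible $(q,r)$ is equivalent, by duality, to $\big\|\int_\R e^{-is\Delta}F(s)\,ds\big\|_{L_x^2}\lesssim\norm{F}_{L_t^{q'}L_x^{r'}}$, hence to $\big\|\int_\R e^{i(t-s)\Delta}F(s)\,ds\big\|_{L_t^qL_x^r}\lesssim\norm{F}_{L_t^{q'}L_x^{r'}}$. Away from the endpoint (that is, for $q>2$) this last bound follows by inserting the dispersive decay and applying the Hardy--Littlewood--Sobolev inequality in the time variable, as in \cite{strichartz, ginibre velo smoothing}. The endpoint pair $(q,r)=\big(2,\tfrac{2d}{d-2}\big)$, available because $d\ge3$, is the delicate case: Hardy--Littlewood--Sobolev fails at the relevant exponent, and one must instead follow Keel--Tao \cite{keel tao}, decomposing the kernel dyadically in $\vert t-s\vert\sim2^j$, estimating each piece by real interpolation between two off-diagonal bounds, and summing by a bilinear/atomic interpolation argument (a Whitney-type decomposition of the exponent square, with the diagonal block handled separately).

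For (b), observe that the two target quantities are dual norms of admissible pairs: $L_{t,x}^{2(d+2)/(d+4)}$ is the spacetime dual of the diagonal admissible pair $L_{t,x}^{2(d+2)/d}$, and $L_t^2L_x^{2d/(d+2)}$ is the dual of the endpoint pair $L_t^2L_x^{2d/(d-2)}$. Composing the homogeneous estimate with its adjoint form gives $\big\|\int_I e^{i(t-s)\Delta}F(s)\,ds\big\|_{\dot{S}^0(I)}\lesssim\norm{F}_{L_t^{\tilde q'}L_x^{\tilde r'}}$ for every admissible $(\tilde q,\tilde r)$, i.e.\ with the full time integral. To replace it by the retarded integral $\int_{t_0}^t$ when $\tilde q>2$, I would invoke the Christ--Kiselev lemma, applicable precisely because the relevant time exponent ($2(d+2)/d$, say) is strictly larger than $2$; this yields the bound with $F\in L_{t,x}^{2(d+2)/(d+4)}$. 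The bound with $F\in L_t^2L_x^{2d/(d+2)}$ touches the endpoint, where Christ--Kiselev is unavailable, so one again appeals to the direct bilinear argument of \cite{keel tao}, which produces the retarded endpoint inhomogeneous estimate itself. Taking the smaller of the two right-hand sides and reinserting $\vert\nabla\vert^s$ gives the stated inequality.

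I expect the main obstacle to be exactly the endpoint analysis: the homogeneous bound at $\big(2,\tfrac{2d}{d-2}\big)$ and its retarded inhomogeneous counterpart with $L_t^2L_x^{2d/(d+2)}$ data. The non-endpoint content is a routine consequence of the dispersive estimate and Hardy--Littlewood--Sobolev, whereas the endpoint requires the full Keel--Tao machinery — real and bilinear interpolation, the Whitney decomposition of the exponent square, and the diagonal block — and it is only here that the hypothesis $d\ge3$ enters essentially.
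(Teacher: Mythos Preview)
The paper does not supply its own proof of this lemma; it is simply stated with citations to \cite{ginibre velo smoothing, keel tao, strichartz} as a standard result. Your outline is exactly the standard route taken in those references---dispersive decay plus $TT^*$ and Hardy--Littlewood--Sobolev for the non-endpoint pairs, the Keel--Tao bilinear/interpolation argument for the endpoint $(2,\tfrac{2d}{d-2})$, and the Christ--Kiselev lemma (respectively the direct Keel--Tao retarded estimate at the endpoint) to pass from the full to the truncated time integral---and it is correct.
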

									\subsection{Concentration-compactness}
									\label{cc section}
	We record here the linear profile decomposition that we will use in Section \ref{reduction section}. We begin with the following

							\begin{definition}[Symmetry group]\label{symmetry group}
For any position $x_0\in\R^d$ and scaling parameter $\lambda>0$, we define a unitary transformation $g_{x_0,\lambda}:\dot{H}_x^{1/2}(\R^d)\to\dot{H}_x^{1/2}(\R^d)$ by
			$$[g_{x_0,\lambda}f](x):=\lambda^{-\frac{d-1}{2}}f(\lambda^{-1}(x-x_0)).$$
We let $G$ denote the group of such transformations. 
							\end{definition} 	
We now state the linear profile decomposition. For the mass-critical NLS, this result was originally proven in \cite{begout-vargas, carles-keraani, merle-vega}, while for the energy-critical NLS, it was established in \cite{keraani}. In the generality we need, a proof can be found in \cite{shao}.

						\begin{lemma}
						[Linear profile decomposition, \cite{shao}]
						\label{linear profile decomposition}
Let $\{u_n\}_{n\geq 1}$ be a bounded sequence in $\dot{H}_x^{1/2}(\R^d).$ After passing to a subsequence if necessary, there exist functions $\{\phi^j\}_{j\geq 1}\subset\dot{H}_x^{1/2}(\R^d)$, group elements $g_n^j\in G$ $($with parameters $x_n^j$ and $\lambda_n^j$$)$, and times $t_n^j\in\R$ such that for all $J\geq 1$, we have the following decomposition:
				$$u_n=\sum_{j=1}^J g_n^j e^{it_n^j\Delta}\phi^j+w_n^J.$$
This decomposition satisfies the following properties:

$\bullet$ For each $j$, either $t_n^j\equiv 0$ or $t_n^j\to\pm\infty$ as $n\to\infty.$

$\bullet$ For $J\geq 1$, we have the following decoupling:
			\begin{equation}\label{decoupling}
		\lim_{n\to\infty}\bigg[ \norm{u_n}_{\dot{H}_x^{1/2}}^2-\sum_{j=1}^J\norm{\phi^j}_{\dot{H}_x^{1/2}}^2
		-\norm{w_n^J}_{\dot{H}_x^{1/2}}^2\bigg]=0.	
			\end{equation}

$\bullet$ For any $j\neq k$, we have the following asymptotic orthogonality condition:
				\begin{equation}\label{orthogonality}
		\frac{\lambda_n^j}{\lambda_n^k}+\frac{\lambda_n^k}{\lambda_n^j}
		+\frac{\vert x_n^j-x_n^k\vert^2}{\lambda_n^j\lambda_n^k}
		+\frac{\vert t_n^j(\lambda_n^j)^2-t_n^k(\lambda_n^k)^2\vert}{\lambda_n^j\lambda_n^k}
		\to\infty
		\quad\text{as }n\to\infty.
				\end{equation}
							
$\bullet$ For all $n$ and all $J\geq 1$, we have $w_n^J\in\dot{H}_x^{1/2}(\R^d)$, with
					\begin{equation}\label{remainder}
					\lim_{J\to\infty}\limsup_{n\to\infty}\xnorms{e^{it\Delta}w_n^J}{\frac{2(d+2)}{d-1}}{\R}=0.
					\end{equation}
					
\end{lemma}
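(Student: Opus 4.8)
Since the statement and its proof in full generality are due to \cite{shao}, I would only outline the strategy here, which is the by-now-standard inductive extraction of profiles from a defect of compactness. The engine is an \emph{inverse Strichartz inequality}: there are $\alpha,\beta>0$ depending only on $d$ so that if $\{f_n\}\subset\dot{H}_x^{1/2}(\R^d)$ obeys $\limsup_n\norm{f_n}_{\dot{H}_x^{1/2}}\leq M$ and $\liminf_n\xonorms{e^{it\Delta}f_n}{\frac{2(d+2)}{d-1}}\geq\eps$, then, after passing to a subsequence, there exist $g_n\in G$ with parameters $x_n\in\R^d$, $\lambda_n>0$, and times $t_n\in\R$ such that $g_n^{-1}e^{-it_n\Delta}f_n\rightharpoonup\phi$ weakly in $\dot{H}_x^{1/2}(\R^d)$ with $\norm{\phi}_{\dot{H}_x^{1/2}}\gtrsim_d\eps^{\alpha}M^{-\beta}$. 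The plan for proving this: first establish a refined Strichartz estimate controlling the scattering norm $\xonorms{e^{it\Delta}f}{\frac{2(d+2)}{d-1}}$ by a product of $\norm{f}_{\dot{H}_x^{1/2}}^{1-\theta}$ and a positive power of a scale-invariant quantity built from the frequency-localized pieces $e^{it\Delta}P_Nf$ (of the type $\sup_N N^{-(d-1)/2}\norm{e^{it\Delta}P_Nf}_{L_{t,x}^\infty}$, controlled by interpolating Strichartz estimates and invoking the dispersive bound for $e^{it\Delta}$, or via a bilinear Strichartz estimate). The lower bound on the scattering norm then forces $|(e^{it\Delta}P_{N_n}f_n)(s_n,y_n)|\gtrsim_{\eps,M}N_n^{(d-1)/2}$ for some dyadic $N_n$ and spacetime point $(s_n,y_n)$; setting $\lambda_n:=N_n^{-1}$, $x_n:=y_n$, $t_n:=-s_n$, $g_n:=g_{x_n,\lambda_n}$, the rescaled sequence $h_n:=g_n^{-1}e^{-it_n\Delta}f_n$ is bounded in $\dot{H}_x^{1/2}$, hence has a weak subsequential limit $\phi$; since the pointwise lower bound says exactly that $|(P_1h_n)(0)|\gtrsim_{\eps,M}1$ and $\psi\mapsto(P_1\psi)(0)$ is a bounded linear functional on $\dot{H}_x^{1/2}$ (its representing kernel has Fourier transform bounded and supported in an annulus, hence lies in $\dot{H}_x^{-1/2}$), passing to the limit gives $|(P_1\phi)(0)|\gtrsim_{\eps,M}1$, whence $\phi\neq0$ with the claimed quantitative bound.

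Granting the inverse inequality, I would build the decomposition by iteration. Put $w_n^0:=u_n$, and suppose $w_n^{J-1}$ has been constructed with $\{w_n^{J-1}\}$ bounded in $\dot{H}_x^{1/2}$ (automatic from boundedness of $\{u_n\}$ and the decoupling below). Pass to a subsequence so that $A_{J-1}:=\lim_n\xonorms{e^{it\Delta}w_n^{J-1}}{\frac{2(d+2)}{d-1}}$ exists. If $A_{J-1}=0$, set $\phi^J=\phi^{J+1}=\dots=0$ and terminate; otherwise apply the inverse Strichartz inequality to $\{w_n^{J-1}\}$ to produce $g_n^J$, $t_n^J$, a nonzero $\phi^J$, and the new remainder $w_n^J:=w_n^{J-1}-g_n^Je^{it_n^J\Delta}\phi^J$. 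By construction $(g_n^J)^{-1}e^{-it_n^J\Delta}w_n^J=\big[(g_n^J)^{-1}e^{-it_n^J\Delta}w_n^{J-1}\big]-\phi^J\rightharpoonup0$ in $\dot{H}_x^{1/2}$; applying the unitary $(g_n^J)^{-1}e^{-it_n^J\Delta}$, expanding the square of the norm, and letting $n\to\infty$ yields the single-step identity $\lim_n\big[\norm{w_n^{J-1}}_{\dot{H}_x^{1/2}}^2-\norm{\phi^J}_{\dot{H}_x^{1/2}}^2-\norm{w_n^J}_{\dot{H}_x^{1/2}}^2\big]=0$, and telescoping these gives \eqref{decoupling}. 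In particular $\sum_{j\geq1}\norm{\phi^j}_{\dot{H}_x^{1/2}}^2\leq\limsup_n\norm{u_n}_{\dot{H}_x^{1/2}}^2<\infty$, so $\norm{\phi^J}_{\dot{H}_x^{1/2}}\to0$; the quantitative lower bound $\norm{\phi^J}_{\dot{H}_x^{1/2}}\gtrsim_dA_{J-1}^{\alpha}M^{-\beta}$ with $M:=\limsup_n\norm{u_n}_{\dot{H}_x^{1/2}}$ then forces $A_J\to0$ as $J\to\infty$, which is \eqref{remainder}.

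Two bookkeeping steps finish the argument. First, after a diagonal subsequence we may assume $t_n^j$ converges in $[-\infty,+\infty]$ for each $j$: if the limit $t_\ast$ is finite, replace $\phi^j$ by $e^{it_\ast\Delta}\phi^j$ and adjust the remainders by the term $g_n^j\big(e^{it_n^j\Delta}-e^{it_\ast\Delta}\big)\phi^j$, which tends to $0$ in $\dot{H}_x^{1/2}$, so that $t_n^j\equiv0$; if the limit is $\pm\infty$, keep $t_n^j\to\pm\infty$. This gives the first bulleted property and leaves the others intact. Second, the orthogonality \eqref{orthogonality} for $j<k$ is proved by induction on $k-j$: if \eqref{orthogonality} failed along a subsequence, then $\lambda_n^j/\lambda_n^k$, $|x_n^j-x_n^k|^2/(\lambda_n^j\lambda_n^k)$, and $|t_n^j(\lambda_n^j)^2-t_n^k(\lambda_n^k)^2|/(\lambda_n^j\lambda_n^k)$ would all remain bounded, i.e. the operator $(g_n^k)^{-1}e^{-it_n^k\Delta}g_n^je^{it_n^j\Delta}$ would converge strongly on $\dot{H}_x^{1/2}$ (along a further subsequence); combining this with $(g_n^j)^{-1}e^{-it_n^j\Delta}w_n^j\rightharpoonup0$, with $w_n^{k-1}=w_n^j-\sum_{l=j+1}^{k-1}g_n^le^{it_n^l\Delta}\phi^l$, and with the orthogonality of the parameters $l\in\{j+1,\dots,k-1\}$ to those of index $k$ (known by the inductive hypothesis, which forces $(g_n^k)^{-1}e^{-it_n^k\Delta}g_n^le^{it_n^l\Delta}\phi^l\rightharpoonup0$), one obtains $(g_n^k)^{-1}e^{-it_n^k\Delta}w_n^{k-1}\rightharpoonup0$, contradicting that this weak limit is the nonzero $\phi^k$.

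The main obstacle is the inverse Strichartz inequality, and inside it the refined Strichartz estimate: one must fix the interpolation exponent $\theta\in(0,1)$ and the auxiliary scale-invariant functional so that the scattering norm is dominated by a strictly sublinear power of the $\dot{H}_x^{1/2}$-norm times a positive power of that functional, uniformly for $d\geq5$ (in fact $d\geq3$); everything else is the familiar weak-compactness-and-iteration machinery. A minor additional point, absent in the mass-critical theory, is that $\dot{H}_x^{1/2}$ is not conserved under \eqref{nls}, so the symmetry group $G$ carries no Galilean boosts and the remainder in Lemma~\ref{linear profile decomposition} is measured in a Strichartz norm rather than an $\dot{H}_x^{1/2}$-critical one; if anything this simplifies the orthogonality bookkeeping relative to the mass-critical case.
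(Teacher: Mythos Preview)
The paper does not actually prove this lemma; it is imported directly from \cite{shao} (see the sentence preceding the lemma in Section~\ref{cc section}), so there is no ``paper's own proof'' to compare against. Your outline is the standard argument --- refined Strichartz to locate a bubble, inverse Strichartz to extract a nontrivial weak limit, iterative removal with $\dot{H}^{1/2}$-decoupling via weak convergence, a diagonal argument to normalize the times, and the usual contradiction argument for parameter orthogonality --- and this is precisely the route taken in \cite{shao} (building on \cite{begout-vargas, carles-keraani, keraani, merle-vega}). The sketch is correct and well organized; the only place one would need to be careful in a full write-up is pinning down the refined Strichartz estimate with the right exponents for $s_c=1/2$ in dimensions $d\geq 5$, but this is exactly what \cite{shao} supplies.
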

									\section{Stability theory}
									\label{stability section}
	In this section, we develop a stability theory for \eqref{nls}. The main result of this section is Theorem \ref{stability theorem}, which will play a key role in the reduction to almost periodic solutions carried out in Section \ref{reduction section}. Throughout this section, we will denote the nonlinearity $\vert u\vert^{\frac{4}{d-1}}u$ by $F(u)$. 
	
	We begin by recording a local well-posedness result of Cazenave and Weissler \cite{cw}. This result requires the data to belong to the inhomogeneous Sobolev space, so that a contraction mapping argument may be run in mass-critical spaces. 
			
								\begin{theorem}[Standard local well-posedness \cite{cw}]\label{slwp} 
Let $d\geq 5$ and $u_0\in{H}_x^{1/2}(\R^d)$. If $I\ni 0$ is a time interval such that
				$$\xnorm{\nsc e^{it\Delta}u_0}{\frac{2(d+1)}{d-1}}{\frac{2d(d+1)}{d^2-d+2}}{I}$$
is sufficiently small, then we may find a unique solution $u:I\times\R^d\to\C$ to \eqref{nls}.
								\end{theorem}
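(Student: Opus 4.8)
The plan is to prove the theorem by the contraction mapping argument of Cazenave and Weissler \cite{cw}. Write $F(u)=\vert u\vert^{\frac{4}{d-1}}u$ and, for fixed $u_0$, set
$$\Phi(u)(t):=e^{it\Delta}u_0-i\int_0^t e^{i(t-s)\Delta}F(u)(s)\,ds.$$
Let $(q_0,r_0):=\bigl(\tfrac{2(d+1)}{d-1},\,\tfrac{2d(d+1)}{d^2-d+2}\bigr)$, which one checks is Schr\"odinger admissible ($\tfrac{2}{q_0}+\tfrac{d}{r_0}=\tfrac{d}{2}$), and let $s:=\tfrac{2d(d+1)}{(d-1)^2}$ be the exponent for which $\nsc$ maps $L_x^{r_0}$ into $L_x^s$ by Sobolev embedding; the arithmetic that drives everything is $\tfrac{2}{q_0}+\tfrac{d}{s}=\tfrac{d-1}{2}$ and $\tfrac{1}{q_0'}=\tfrac{4}{d-1}\cdot\tfrac{1}{q_0}+\tfrac{1}{q_0}$, so that the nonlinearity, of homogeneity $1+\tfrac{4}{d-1}$, pairs (after H\"older in $t$ and $x$) a factor $\xnorm{u}{q_0}{s}{I}^{4/(d-1)}$ against a factor in $L_t^{q_0}L_x^{r_0}$. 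Since $(q_0,r_0)$ is admissible, Strichartz gives $\eta:=\xnorm{\nsc e^{it\Delta}u_0}{q_0}{r_0}{I}\lesssim\norm{u_0}_{\dot{H}_x^{1/2}}$ for every interval $I$, and by dominated convergence $\eta\to0$ as $\vert I\vert\to0$; this is what makes the hypothesis attainable and the statement genuinely local. I would run the contraction in a small closed ball of $\{u:\norm{u}_{\dot{S}^{1/2}(I)}+\xnorm{u}{q_0}{r_0}{I}<\infty\}$ --- concretely $\{u:\xnorm{\nsc u}{q_0}{r_0}{I}\le2\eta,\ \norm{u}_{\dot{S}^{1/2}(I)}\lesssim\norm{u_0}_{\dot{H}_x^{1/2}},\ \xnorm{u}{q_0}{r_0}{I}\lesssim\norm{u_0}_{L_x^2}\}$ --- equipped with the \emph{derivative-free} metric $d(u,v):=\xnorm{u-v}{q_0}{r_0}{I}$, with respect to which this set is complete. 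Here the inhomogeneous hypothesis $u_0\in H_x^{1/2}\subset L_x^2$ enters exactly once: since $(q_0,r_0)$ is admissible, $\xnorm{e^{it\Delta}u_0}{q_0}{r_0}{I}\lesssim\norm{u_0}_{L_x^2}$ is finite, so the metric space is non-trivial. This is the sense in which, following \cite{cw}, the contraction is run in mass-critical ($L_x^2$-based) function spaces.

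The nonlinear estimates come from combining the Strichartz inequality (in its standard inhomogeneous form, allowing the dual of any admissible pair on the right-hand side), the fractional chain rule, Lemma~\ref{derivatives of differences}, and Sobolev embedding. Using the chain rule with the split dictated by the exponents above, together with $\xnorm{u}{q_0}{s}{I}\lesssim\xnorm{\nsc u}{q_0}{r_0}{I}$, one obtains
$$\xnorm{\nsc F(u)}{q_0'}{r_0'}{I}\lesssim\xnorm{u}{q_0}{s}{I}^{4/(d-1)}\xnorm{\nsc u}{q_0}{r_0}{I}\lesssim\xnorm{\nsc u}{q_0}{r_0}{I}^{1+4/(d-1)},$$
and similarly $\xnorm{F(u)}{q_0'}{r_0'}{I}\lesssim\xnorm{\nsc u}{q_0}{r_0}{I}^{4/(d-1)}\xnorm{u}{q_0}{r_0}{I}$. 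Feeding these into the inhomogeneous Strichartz estimate with the dual pair $(q_0',r_0')$ bounds $\norm{\Phi(u)}_{\dot{S}^{1/2}(I)}$, $\xnorm{\nsc\Phi(u)}{q_0}{r_0}{I}$ and $\xnorm{\Phi(u)}{q_0}{r_0}{I}$; since $1+\tfrac{4}{d-1}>1$, the $\nsc$-component closes on its own ($\xnorm{\nsc\Phi(u)}{q_0}{r_0}{I}\le\eta+O\bigl(\eta^{1+4/(d-1)}\bigr)\le2\eta$) with no smallness assumption on the data, and the remaining two bounds then follow, so $\Phi$ maps the ball into itself once $\eta$ is small --- which, as noted, holds for a short enough $I$. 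For the contraction estimate I would use the pointwise bound $\vert F(u)-F(v)\vert\lesssim\bigl(\vert u\vert^{4/(d-1)}+\vert v\vert^{4/(d-1)}\bigr)\vert u-v\vert$, H\"older, and Sobolev to get $d(\Phi(u),\Phi(v))\lesssim\eta^{4/(d-1)}\,d(u,v)$, a contraction for $\eta$ small; Lemma~\ref{derivatives of differences} is what additionally lets one control $\nsc(\Phi(u)-\Phi(v))$ and thereby propagate the $\dot{H}_x^{1/2}$ regularity and continuity in $t$. Banach's fixed point theorem then produces $u$, which on compact subintervals lies in $C_t\dot{H}_x^{1/2}\cap L_{t,x}^{2(d+2)/(d-1)}$ --- the latter a scaling-critical norm dominated by $\norm{u}_{\dot{S}^{1/2}(I)}$ via Sobolev embedding --- so it is a solution in the sense of Definition~\ref{solution definition}; uniqueness in that class follows from a standard continuity argument, since any solution with data $u_0$ lies, on a short interval, in the ball on which $\Phi$ contracts.

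The step I expect to be the main obstacle is the exponent bookkeeping and its interaction with the hypothesis $d\ge5$. One must verify that $(q_0,r_0)$ is admissible, that $(q_0',r_0')$ pairs correctly with $F$ after the fractional chain rule, that $s$ is precisely the Sobolev exponent for which the H\"older-in-time inequality closes with no residual power of $\vert I\vert$, and that every exponent that appears --- $r_0,\,r_0',\,s,\,q_0,\,q_0'$, and those implicit in the chain rule and in Lemma~\ref{derivatives of differences} --- lies strictly in the admissible range $(1,\infty)$; this is exactly where $d\ge5$ is needed, as it is equivalent to $\tfrac{4}{d-1}\le1$, i.e.\ to $F(u)=\vert u\vert^{4/(d-1)}u$ being merely $C^1$ with $F'$ H\"older (rather than Lipschitz) continuous. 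This same low regularity of $F$ is why the contraction must be carried out in a space without derivatives: in Lemma~\ref{derivatives of differences} the difference $u-v$ cannot carry a derivative in the term proportional to the full power of $v$, so the difference must be measured in a norm with no derivatives, which forces the use of $L_x^2$-scaling --- hence of data in $H_x^{1/2}$ --- and it is precisely this observation that makes possible the refined, derivative-free stability theory of Section~\ref{stability section} advertised in the introduction. The remaining points --- the maximal-lifespan statement, the precise form of uniqueness, persistence of regularity --- are routine once these estimates are in hand, exactly as in \cite{cw}.
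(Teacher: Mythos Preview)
Your proposal is correct and matches what the paper does. The paper does not actually prove Theorem~\ref{slwp}; it merely records it as a result of Cazenave--Weissler \cite{cw}, noting only that ``This result requires the data to belong to the inhomogeneous Sobolev space, so that a contraction mapping argument may be run in mass-critical spaces.'' Your argument is precisely this: a Banach fixed-point argument for $\Phi$ run on a ball of $\dot S^{1/2}(I)\cap L_t^{q_0}L_x^{r_0}$ equipped with the derivative-free metric $d(u,v)=\xnorm{u-v}{q_0}{r_0}{I}$, where the finiteness of the $L_t^{q_0}L_x^{r_0}$-norm of the linear flow (hence the nontriviality of the ball) is exactly where $u_0\in L_x^2$ enters. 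Your exponent bookkeeping is accurate, and your identification of the role of $d\geq 5$ (namely $\tfrac{4}{d-1}\leq 1$, forcing the contraction metric to carry no derivatives) is on point and is indeed the mechanism behind the derivative-free stability theory in Section~\ref{stability section}.
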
															
	
	Next, we turn to the stability results. We will make use of function spaces that are critical with respect to scaling, but do not involve any derivatives. In particular, for a time interval $I$, we define the following norms:
				\begin{align*}
				\norm{u}_{X(I)}:=\xnorm{u}{\frac{4(d+1)}{d-1}}{\frac{2(d+1)}{d-1}}{I},
				\quad
				\norm{F}_{Y(I)}:=\xnorm{F}{\frac{4(d+1)}{d+3}}{\frac{2(d+1)}{d+3}}{I}.
				\end{align*}
	
	We first relate the $X$-norm to the usual Strichartz norms. By Sobolev embedding, we get $\norm{u}_{X(I)}\lesssim \norm{u}_{\dot{S}^{1/2}(I)},$ while H\"older and Sobolev embedding together imply
			\begin{equation}
			\label{e and i}
			\xnorms{u}{\frac{2(d+2)}{d-1}}{I}\lesssim \norm{u}_{X(I)}^c\norm{u}_{\dot{S}^{1/2}(I)}^{1-c}\quad\text{for some}\quad 0<c(d)<1.
			\end{equation}

	Next, we record a Strichartz estimate, which one can prove via the standard approach (namely, by applying the dispersive estimate and Hardy--Littlewood--Sobolev).
				\begin{lemma}
Let $I$ be a compact time interval and $t_0\in I$. Then for all $t\in I$,
				\begin{equation}\label{xy strichartz}
				\bigg\|\int_{t_0}^t e^{i(t-s)\Delta}F(s)\,ds\bigg\|_{X(I)}\lesssim \norm{F}_{Y(I)}.
				\end{equation}
						\end{lemma}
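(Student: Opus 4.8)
The plan is to establish \eqref{xy strichartz} directly from the dispersive estimate and the Hardy--Littlewood--Sobolev inequality, as the statement of the lemma suggests. First I would record the dispersive estimate $\|e^{i(t-s)\Delta}F(s)\|_{L_x^{p'}}\lesssim |t-s|^{-d(\frac12-\frac1p)}\|F(s)\|_{L_x^{p}}$ for $2\le p\le\infty$, where $p'$ is the dual exponent. The space $X(I)$ uses the spatial exponent $r=\frac{2(d+1)}{d-1}$, so I set $p'=r$, i.e. $p=\frac{2(d+1)}{d+3}$, which is precisely the spatial exponent appearing in $Y(I)$. With this choice the dispersive decay exponent is $d(\frac12-\frac1p)=d\cdot\frac{2}{d+1}\cdot\frac12\cdot\frac{?}{}$; carrying out the arithmetic gives exponent $\frac{d(d-1)}{2(d+1)}\cdot\frac{2}{d-1}$— in any case a number strictly between $0$ and $1$ for $d\ge 2$, which is exactly what makes the one-dimensional-in-time singular integral bounded. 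Let me denote this exponent by $\alpha = \frac{2}{d+1}$ after simplification; one checks $0<\alpha<1$.

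Next I would reduce the spacetime bound to a purely temporal estimate: taking the $L_x^r$ norm of $\int_{t_0}^t e^{i(t-s)\Delta}F(s)\,ds$ and applying Minkowski's inequality in the time integral followed by the dispersive estimate yields
$$\Big\|\int_{t_0}^t e^{i(t-s)\Delta}F(s)\,ds\Big\|_{L_x^r}\lesssim \int_I |t-s|^{-\alpha}\|F(s)\|_{L_x^{r'}}\,ds.$$
Here $r'=\frac{2(d+1)}{d+3}$ is the $Y$-exponent in space, as arranged above. Now I take the $L_t^{q}$ norm in $t$ over $I$ with $q=\frac{4(d+1)}{d-1}$, the $X$-exponent in time, and recognize the right-hand side as a fractional integration (Riesz potential) of the function $s\mapsto\|F(s)\|_{L_x^{r'}}$. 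By the Hardy--Littlewood--Sobolev inequality this is bounded by $\|\,\|F(s)\|_{L_x^{r'}}\,\|_{L_t^{\tilde q}}$ provided the exponents satisfy the HLS scaling relation $\frac{1}{\tilde q}=\frac1q+\alpha - ?$; one verifies that the required $\tilde q$ is exactly $\frac{4(d+1)}{d+3}$, the $Y$-exponent in time, and that $1<\tilde q<q<\infty$ so the HLS inequality applies. Assembling these gives $\|\cdot\|_{X(I)}\lesssim\|F\|_{Y(I)}$, which is \eqref{xy strichartz}.

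The only genuine point requiring care is the bookkeeping of exponents: one must confirm that with $r=\frac{2(d+1)}{d-1}$ the dispersive exponent $\alpha$ lies strictly in $(0,1)$ and that the three Lebesgue exponents $(q, \tilde q, \alpha)$ satisfy the Hardy--Littlewood--Sobolev hypotheses with no endpoint issues — in particular that $\tilde q>1$, which for $d\ge 5$ is comfortably true. An alternative, perhaps cleaner, route avoiding HLS entirely is to observe that both $(q,r)=\big(\tfrac{4(d+1)}{d-1},\tfrac{2(d+1)}{d-1}\big)$ and its ``dual'' pair $\big(\tfrac{4(d+1)}{d+3}\,\text{-type exponents}\big)$ are acceptable (non-endpoint) exponents for the inhomogeneous Strichartz inequality of Keel--Tao in the form permitting distinct admissible-type pairs on the two sides; then \eqref{xy strichartz} is an immediate instance once one checks the acceptability conditions $\frac1q+\frac1{\tilde q}\le 1$ type constraints. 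I expect the main (minor) obstacle to be simply verifying these exponent inequalities rather than any conceptual difficulty; the estimate is subcritical-in-time and follows the textbook template.
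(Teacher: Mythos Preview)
Your approach is correct and is precisely what the paper indicates (the paper does not give a proof, only the parenthetical remark that one applies the dispersive estimate followed by Hardy--Littlewood--Sobolev). One arithmetic slip to fix: the dispersive decay exponent is $\alpha=d\bigl(\tfrac12-\tfrac1r\bigr)=\tfrac{d}{d+1}$, not $\tfrac{2}{d+1}$; with this value the HLS scaling condition $1+\tfrac1q=\alpha+\tfrac1{\tilde q}$ is satisfied exactly by $q=\tfrac{4(d+1)}{d-1}$ and $\tilde q=\tfrac{4(d+1)}{d+3}$, and $0<\alpha<1$, $1<\tilde q<q<\infty$ hold for all $d\geq 2$, so no endpoint issues arise.
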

	Finally, we collect some estimates that will allow us to control the nonlinearity.
							\begin{lemma}\label{nonlinear estimate lemma} 
Fix $d\geq 5$. Then, with spacetime norms over $I\times\R^d$, we have
	\begin{align}
	\norm{F(u)}_{Y(I)}&\lesssim\norm{u}_{X(I)}^{\frac{d+3}{d-1}}									\label{y to x}
	\\
	\norm{F(u)-F(\tilde{u})}_{Y(I)}&\lesssim
	\big\{	\norm{u}_{X(I)}^{\frac{4}{d-1}}+\norm{\tilde{u}}_{X(I)}^{\frac{4}{d-1}}\big\}
	\norm{u-\tilde{u}}_{X(I)}
								\label{y to x diff}
	\\
	\xonorm{\nsc F(u)}{2}{\frac{2d}{d+2}}
	&\lesssim \norm{u}_{X(I)}^{\frac{4}{d-1}}\norm{u}_{\dot{S}^{1/2}(I)}									\label{n to x}
	\\
	\xonorm{\nsc[F(u)-F(\tilde{u})]}{2}{\frac{2d}{d+2}}
	&\lesssim\norm{u-\tilde{u}}_{X(I)}^{\frac{4}{d-1}}\norm{\tilde{u}}_{\dot{S}^{1/2}(I)}+\norm{u}_{X(I)}^{\frac{4}{d-1}}\norm{u-\tilde{u}}_{\dot{S}^{1/2}(I)}.								\label{n to x diff}
	\end{align}	
								\end{lemma}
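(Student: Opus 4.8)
The plan is to deduce all four bounds from pointwise estimates on the nonlinearity, together with H\"older's inequality, the fractional calculus lemmas recorded above, and Strichartz estimates; no new idea is needed beyond a careful accounting of exponents. Throughout write $p=\tfrac{4}{d-1}$, so that $F(u)=\vert u\vert^p u$ with $p>0$ and $p+1=\tfrac{d+3}{d-1}$ for every $d\geq 5$. The whole computation is driven by this last identity and by the fact that the pair $\big(\tfrac{2(d+1)}{d-1},\tfrac{2d(d+1)}{d^2-d+2}\big)$ is Schr\"odinger admissible, so that $\xnorm{\nsc u}{\frac{2(d+1)}{d-1}}{\frac{2d(d+1)}{d^2-d+2}}{I}\lesssim\norm{u}_{\dot{S}^{1/2}(I)}$.

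For \eqref{y to x} I would simply use the pointwise identity $\vert F(u)\vert=\vert u\vert^{p+1}=\vert u\vert^{\frac{d+3}{d-1}}$ together with the homogeneity of Lebesgue norms: since $\tfrac{d+3}{d-1}\cdot\tfrac{4(d+1)}{d+3}=\tfrac{4(d+1)}{d-1}$ and $\tfrac{d+3}{d-1}\cdot\tfrac{2(d+1)}{d+3}=\tfrac{2(d+1)}{d-1}$, one gets $\norm{F(u)}_{Y(I)}=\norm{u}_{X(I)}^{\frac{d+3}{d-1}}$ directly. For \eqref{y to x diff} I would invoke the elementary bound $\vert F(u)-F(\tilde u)\vert\lesssim\big(\vert u\vert^p+\vert\tilde u\vert^p\big)\vert u-\tilde u\vert$ (valid since $F\in C^1(\C)$ with $\vert F'(z)\vert\lesssim\vert z\vert^p$) and apply H\"older in space and then in time, placing every factor of $u$, $\tilde u$, and $u-\tilde u$ in the $X$-exponents. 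This is consistent because $\tfrac{p}{2(d+1)/(d-1)}+\tfrac{1}{2(d+1)/(d-1)}=(p+1)\tfrac{d-1}{2(d+1)}=\tfrac{d+3}{2(d+1)}$ is the reciprocal of the $Y$ spatial exponent, and similarly in time after the split $\tfrac{d+3}{4(d+1)}=\tfrac{1}{d+1}+\tfrac{d-1}{4(d+1)}$ of the reciprocal of the $Y$ time exponent.

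For the two estimates carrying a half-derivative, the idea is to move $\nsc$ onto a single factor by fractional calculus and absorb the remaining $p$ factors into $X$-norms. For \eqref{n to x}, apply the fractional chain rule with $s=\tfrac12$ and exponents $r=\tfrac{2d}{d+2}$, $r_1=\tfrac{d+1}{2}$, $r_2=\tfrac{2d(d+1)}{d^2-d+2}$ (note $\tfrac1r=\tfrac1{r_1}+\tfrac1{r_2}$), use $\norm{F'(u)}_{L_x^{r_1}}\lesssim\norm{u}_{L_x^{pr_1}}^p$ with $pr_1=\tfrac{2(d+1)}{d-1}$, and then H\"older in time with the split $\tfrac12=\tfrac1{d+1}+\tfrac{d-1}{2(d+1)}$. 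The factor $\nsc u$ then sits in $L_t^{2(d+1)/(d-1)}L_x^{r_2}$, which is controlled by $\norm{u}_{\dot{S}^{1/2}(I)}$ via the admissibility noted above, while the $p$ remaining factors of $u$ sit in $L_t^{p(d+1)}L_x^{2(d+1)/(d-1)}=L_t^{4(d+1)/(d-1)}L_x^{2(d+1)/(d-1)}$ and contribute $\norm{u}_{X(I)}^{\frac{4}{d-1}}$. Estimate \eqref{n to x diff} is identical in structure: one replaces the chain rule by Lemma \ref{derivatives of differences} applied with $v=u-\tilde u$ (so $u+v=u$ in that lemma's notation), with $r=\tfrac{2d}{d+2}$, $r_2=\tfrac{2(d+1)}{d-1}$, $r_1=\tfrac{2d(d+1)}{d^2-d+2}$; the two terms produced by that lemma, after the same H\"older and admissibility bookkeeping, become exactly $\norm{u-\tilde u}_{X(I)}^{\frac{4}{d-1}}\norm{\tilde u}_{\dot{S}^{1/2}(I)}$ and $\norm{u}_{X(I)}^{\frac{4}{d-1}}\norm{u-\tilde u}_{\dot{S}^{1/2}(I)}$.

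The only subtlety here — the closest thing to an obstacle — is that when $d\geq 6$ one has $p=\tfrac{4}{d-1}<1$, so one must check that the hypotheses of the fractional calculus lemmas still apply: that $F(u)=\vert u\vert^p u$ is genuinely $C^1(\C)$ (it is, since writing $F$ in real coordinates one sees $\nabla F$ extends continuously by $0$ at the origin whenever $p>0$, with $\vert\nabla F\vert\lesssim\vert u\vert^p$), and that the admissible ranges $s\in(0,1]$ for the chain rule and $0<s<1$ for Lemma \ref{derivatives of differences} hold for $s=\tfrac12$, which they do. Everything else is routine H\"older and Sobolev/Strichartz bookkeeping.
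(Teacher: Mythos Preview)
Your proof is correct and follows essentially the same route as the paper: \eqref{y to x} and \eqref{y to x diff} via the pointwise bound (equivalently, the fundamental theorem of calculus) and H\"older, and \eqref{n to x}, \eqref{n to x diff} via the fractional chain rule and Lemma~\ref{derivatives of differences} with exactly the exponents $r_1=\tfrac{2d(d+1)}{d^2-d+2}$, $r_2=\tfrac{2(d+1)}{d-1}$ that the paper uses. Your explicit exponent bookkeeping and the remark on $F\in C^1(\C)$ when $p<1$ simply spell out what the paper leaves implicit.
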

\begin{proof} We first note that \eqref{y to x} follows from H\"older, while \eqref{y to x diff} follows from the fundamental theorem of calculus followed by H\"older. 

Next, we see that \eqref{n to x} follows from H\"older and the fractional chain rule. Indeed,
		$$\xonorm{\nsc F(u)}{2}{\frac{2d}{d+2}}\lesssim\xonorm{u}{\frac{4(d+1)}{d-1}}{\frac{2(d+1)}{d-1}}^{\frac{4}{d-1}}
								\xonorm{\nsc u}{\frac{2(d+1)}{d-1}}{\frac{2d(d+1)}{d^2-d+2}}.$$
Using these same exponents with Lemma \ref{derivatives of differences}, we deduce \eqref{n to x diff}. 
\end{proof}	
	
	We may now state our first stability result.
	
								\begin{lemma}
								[Short-time perturbations]
								\label{short-time}
Let $d\geq 5$ and let $I$ be a compact time interval, with $t_0\in I$. Let $\tilde{u}:I\times\R^d\to\C$ be a solution to $(i\partial_t+\Delta)\tilde{u}=F(\tilde{u})+e$ with $\tilde{u}(t_0)=\tilde{u}_0\in\dot{H}_x^{1/2}$. Suppose 
						\begin{equation}\label{finite energy}
						\norm{\tilde{u}}_{\dot{S}^{1/2}(I)}\leq E\quad\text{and}\quad\xnorms{\nsc e}{\frac{2(d+2)}{d+4}}{I}\leq E
						\end{equation}		
				%
for some $E>0$. Let $u_0\in\dot{H}_x^{1/2}(\R^d)$ satisfy
				\begin{align}
				\norm{u_0-\tilde{u}_0}_{\dot{H}_x^{1/2}}&\leq E,	\label{data Eness}
				\end{align}
and suppose that we have the smallness conditions
				\begin{align}
				\norm{\tilde{u}}_{X(I)}&\leq\delta, \label{delta smallness}
				\\ \norm{e^{i(t-t_0)\Delta}(u_0-\tilde{u}_0)}_{X(I)}+\norm{e}_{Y(I)}&\leq\eps, \label{data smallness}
				\end{align}
for some small $0<\delta=\delta(E)$ and $0<\eps<\eps_0(E).$ Then there exists $u:I\times\R^d\to\C$ solving \eqref{nls} with $u(t_0)=u_0$ such that
				\begin{align}
				\norm{u-\tilde{u}}_{X(I)}+\norm{F(u)-F(\tilde{u})}_{Y(I)}&\lesssim\eps,						
				\label{X close}
				\\ 
				\norm{u-\tilde{u}}_{\dot{S}^{1/2}(I)}+
				\xnorm{\nsc[F(u)-F(\tilde{u})]}{2}{\frac{2d}{d+2}}{I}
				&\lesssim_E 1.
				\label{S difference bounded}
				\end{align}
								\end{lemma}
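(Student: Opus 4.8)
The strategy is the standard bootstrap/continuity argument for short-time perturbations, adapted to the derivative-free spaces $X$ and $Y$. Set $w:=u-\tilde u$, which (formally) solves $(i\partial_t+\Delta)w = F(\tilde u+w)-F(\tilde u) - e$ with $w(t_0)=u_0-\tilde u_0$. First I would construct $u$ on $I$: since the hypotheses \eqref{delta smallness}--\eqref{data smallness} give smallness of $\norm{\tilde u}_{X(I)}$ and of $\norm{e^{i(t-t_0)\Delta}(u_0-\tilde u_0)}_{X(I)}$, one has $\norm{e^{i(t-t_0)\Delta}u_0}_{X(I)}\lesssim\delta+\eps$ small, and together with the Strichartz-type estimate \eqref{xy strichartz} and the nonlinear estimate \eqref{y to x}, a contraction-mapping argument in the ball $\{\norm{u}_{X(I)}\le 2(\delta+\eps)\}$ (or an invocation of Theorem~\ref{slwp} after truncating frequencies / approximating the $\dot H^{1/2}$ data by $H^{1/2}$ data, then passing to the limit) produces a solution $u$ to \eqref{nls} on all of $I$ with $\norm{u}_{X(I)}\lesssim\delta+\eps$.

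Next I would prove the $X$-closeness estimate \eqref{X close}. Apply the Strichartz estimate \eqref{xy strichartz} to the Duhamel formula for $w$:
\[
\norm{w}_{X(I)}\lesssim \norm{e^{i(t-t_0)\Delta}(u_0-\tilde u_0)}_{X(I)} + \norm{F(u)-F(\tilde u)}_{Y(I)} + \norm{e}_{Y(I)}.
\]
Bound the difference term via \eqref{y to x diff} by $\bigl(\norm{u}_{X(I)}^{4/(d-1)}+\norm{\tilde u}_{X(I)}^{4/(d-1)}\bigr)\norm{w}_{X(I)}$. Since $\norm{u}_{X(I)}$ and $\norm{\tilde u}_{X(I)}$ are both $\lesssim\delta+\eps$, choosing $\delta=\delta(E)$ and $\eps_0(E)$ small enough makes the coefficient $\le\tfrac12$, so it can be absorbed on the left. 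Combined with \eqref{data smallness} this gives $\norm{w}_{X(I)}\lesssim\eps$, and then \eqref{y to x diff} again gives $\norm{F(u)-F(\tilde u)}_{Y(I)}\lesssim\eps$, which is \eqref{X close}.

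Finally I would prove the $\dot S^{1/2}$ bound \eqref{S difference bounded}. Apply the full Strichartz estimate (with $s=1/2$) to the equation for $w$, using the $L_t^2 L_x^{2d/(d+2)}$ endpoint on the nonlinear terms and $L_t^{2(d+2)/(d+4)}L_x^{2(d+2)/(d+4)}$ on $e$:
\[
\norm{w}_{\dot S^{1/2}(I)}\lesssim \norm{u_0-\tilde u_0}_{\dot H_x^{1/2}} + \xonorm{\nsc[F(u)-F(\tilde u)]}{2}{\frac{2d}{d+2}} + \xonorms{\nsc e}{\frac{2(d+2)}{d+4}}.
\]
Here \eqref{data Eness} controls the first term by $E$, \eqref{finite energy} controls the last by $E$, and \eqref{n to x diff} bounds the middle term by $\norm{w}_{X(I)}^{4/(d-1)}\norm{\tilde u}_{\dot S^{1/2}(I)} + \norm{u}_{X(I)}^{4/(d-1)}\norm{w}_{\dot S^{1/2}(I)}$. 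The first summand is $\lesssim_E \eps^{4/(d-1)}\cdot E \lesssim_E 1$ by \eqref{X close} and \eqref{finite energy}; the second has small coefficient $\norm{u}_{X(I)}^{4/(d-1)}\lesssim(\delta+\eps)^{4/(d-1)}\le\tfrac12$ and is absorbed on the left. This yields $\norm{w}_{\dot S^{1/2}(I)}\lesssim_E 1$, and then \eqref{n to x diff} once more gives the bound on $\xonorm{\nsc[F(u)-F(\tilde u)]}{2}{\frac{2d}{d+2}}$, completing \eqref{S difference bounded}.

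The main obstacle is the interplay between the two norms: the derivative-free space $X$ is where smallness lives, but the $\dot S^{1/2}$ bound is a priori only $O(E)$, not small — one must be careful that the derivative estimate \eqref{n to x diff} pairs each factor of $\norm{u-\tilde u}_{\dot S^{1/2}}$ (the large quantity) with a power of an $X$-norm (the small quantity), so that the large $\dot S^{1/2}$ term can always be absorbed. A secondary technical point is that the construction of $u$ and the Duhamel identity for $w$ are only rigorous after an approximation argument (frequency truncation or $H^{1/2}$-approximation of the data together with Theorem~\ref{slwp}); the estimates above are stable under such limits, so this causes no real difficulty.
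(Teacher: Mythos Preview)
Your proposal is correct and follows essentially the same route as the paper: construct $u$ via approximation by $H_x^{1/2}$ data and Theorem~\ref{slwp}, close the $X$-bootstrap using \eqref{xy strichartz} and \eqref{y to x diff}, then close the $\dot S^{1/2}$-estimate using Strichartz and \eqref{n to x diff}, absorbing the $\norm{w}_{\dot S^{1/2}}$ term via the small $X$-coefficient. The only elision is that the bound $\norm{e^{i(t-t_0)\Delta}u_0}_{X(I)}\lesssim\delta+\eps$ does not follow directly from $\norm{\tilde u}_{X(I)}\le\delta$ (since $\tilde u$ is not the free evolution of $\tilde u_0$); one must first write $e^{i(t-t_0)\Delta}\tilde u_0=\tilde u(t)+i\int_{t_0}^t e^{i(t-s)\Delta}\bigl(F(\tilde u)+e\bigr)\,ds$ and apply \eqref{xy strichartz}, \eqref{y to x}, \eqref{delta smallness}, \eqref{data smallness}, exactly as the paper does.
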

	
\begin{proof}
We first suppose $u_0\in L_x^2$, so that Theorem \ref{slwp} provides the solution $u$. We will then prove \eqref{X close} and \eqref{S difference bounded} as \emph{a priori} estimates. After the lemma is proven for $u_0\in H_x^{1/2}$, we can use approximation by $H_x^{1/2}$-functions to see that the lemma holds for $u_0\in\dot{H}_x^{1/2}$. Throughout the proof, spacetime norms will be over $I\times\R^d$.

We will first show
					\begin{equation}\label{u delta smallness}
					\norm{u}_{X(I)}\lesssim\delta.
					\end{equation}
By the triangle inequality, \eqref{xy strichartz}, \eqref{y to x}, \eqref{delta smallness}, and \eqref{data smallness}, we get
	\begin{align*}
	\norm{e^{i(t-t_0)\Delta}\tilde{u}_0}_{X(I)}\lesssim \norm{\tilde{u}}_{X(I)}+\norm{F(\tilde{u})}_{Y(I)}+\norm{e}_{Y(I)}
	\lesssim\delta+\delta^{\frac{d+3}{d-1}}+\eps.
	\end{align*}
Combining this estimate with \eqref{data smallness} and using the triangle inequality then gives
		$$\norm{e^{i(t-t_0)\Delta}u_0}_{X(I)}\lesssim\delta$$
for $\delta$ and $\eps\lesssim\delta$ sufficiently small. Thus, by \eqref{xy strichartz} and \eqref{y to x}, we get
		\begin{align*}
		\norm{u}_{X(I)}\lesssim \delta+\norm{F(u)}_{Y(I)}
			\lesssim\delta+\norm{u}_{X(I)}^{\frac{d+3}{d-1}},
		\end{align*}
which (taking $\delta$ sufficiently small) implies \eqref{u delta smallness}.

	We now turn to proving the desired estimates for $w:=u-\tilde{u}$. Note first that $w$ is a solution to $(i\partial_t+\Delta)w=F(u)-F(\tilde{u})-e$, with $w(t_0)=u_0-\tilde{u}_0$; thus, we can use \eqref{xy strichartz}, \eqref{y to x diff}, \eqref{delta smallness}, \eqref{data smallness}, and \eqref{u delta smallness} to see
	\begin{align*}
	\norm{w}_{X(I)}&\lesssim\norm{e^{i(t-t_0)\Delta}(u_0-\tilde{u}_0)}_{X(I)}+\norm{e}_{Y(I)}+\norm{F(u)-F(\tilde{u})}_{Y(I)}
			\\	&\lesssim \eps+\big\{\norm{u}_{X(I)}^{\frac{4}{d-1}}+\norm{\tilde{u}}_{X(I)}^{\frac{4}{d-1}}\big\}\norm{w}_{X(I)}
			\\	& \lesssim \eps+\delta^{\frac{4}{d-1}}\norm{w}_{X(I)}.
	\end{align*}
Taking $\delta$ sufficiently small, we see that the first estimate in \eqref{X close} holds. Using the first estimate in \eqref{X close}, along with \eqref{y to x diff}, \eqref{delta smallness}, and \eqref{u delta smallness}, we see that the remaining estimate in \eqref{X close} holds, as well.

Next, by Strichartz, \eqref{n to x diff},  \eqref{finite energy}, \eqref{data Eness}, \eqref{delta smallness}, \eqref{X close}, and \eqref{u delta smallness}, we get
	\begin{align*}
	\norm{w}_{\dot{S}^{1/2}(I)}&\lesssim\norm{u_0-\tilde{u}_0}_{\dot{H}_x^{1/2}}+\norm{\nsc e}_{L_{t,x}^{\frac{2(d+2)}{d+4}}}+
							\norm{\nsc[F(u)-F(\tilde{u})]}_{L_t^2L_x^{\frac{2d}{d+2}}}
	\\ &\lesssim_E 1+\norm{\tilde{u}}_{\dot{S}^{1/2}(I)}\norm{w}_{X(I)}^{\frac{4}{d-1}}+\norm{w}_{\dot{S}^{1/2}(I)}\norm{u}_{X(I)}^{\frac{4}{d-1}}
	\\ &\lesssim_E 1+\delta^{\frac{4}{d-1}}\norm{w}_{\dot{S}^{1/2}(I)}.
	\end{align*}
Taking $\delta=\delta(E)$ sufficiently small then gives the first estimate in \eqref{S difference bounded}. We get the remaining estimate in \eqref{S difference bounded} by using \eqref{n to x diff} with \eqref{finite energy}, \eqref{X close}, \eqref{u delta smallness}, and the first estimate in \eqref{S difference bounded}. 
\end{proof}

\begin{remark}
As mentioned in the introduction, the error $e$ is only required to be small in a space without derivatives (see \eqref{data smallness}); it merely needs to be \emph{bounded} in a space with derivatives (see \eqref{finite energy}). This will also be the case in Theorem \ref{stability theorem} below (see \eqref{**} and \eqref{***}).  We will see the benefit of this refinement when we carry out the proof of Theorem \ref{reduction theorem} in Section \ref{reduction section} (see Remark \ref{talk}). 
\end{remark}
	
	We continue to the main result of this section:
					\begin{theorem}
					[Stability]
					\label{stability theorem}
Let $d\geq 5$, and let $I$ be a compact time interval, with $t_0\in I$. Suppose $\tilde{u}$ is a solution to $(i\partial_t+\Delta)\tilde{u}=F(\tilde{u})+e$, with $\tilde{u}(t_0)=\tilde{u}_0$. Suppose
					\begin{align}
					\label{**}
					\norm{\tilde{u}}_{\dot{S}^{1/2}(I)}\leq E\quad\text{and}\quad\xnorms{\nsc e}{\frac{2(d+2)}{d+4}}{I}&\leq E
					\end{align}
for some $E>0$. Let $u_0\in\dot{H}_x^{1/2}(\R^d)$, and suppose we have the smallness conditions
					\begin{align}
					\label{***}
					\norm{u_0-\tilde{u}_0}_{\dot{H}_x^{1/2}(\R^d)}
					+\norm{e}_{Y(I)}&\leq\eps
					\end{align}
for some small $0<\eps<\eps_1(E)$. Then, there exists $u:I\times\R^d\to\C$ solving \eqref{nls} with $u(t_0)=u_0$, and there exists $0<c(d)<1$ such that
					\begin{align}
					\xnorms{u-\tilde{u}}{\frac{2(d+2)}{d-1}}{I}&\lesssim_E \eps^c.
					\label{concluuu}
					\end{align}
					\end{theorem}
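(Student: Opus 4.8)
The plan is to upgrade the short-time perturbation result, Lemma~\ref{short-time}, to the large-time statement by subdividing $I$ into finitely many subintervals on each of which the smallness hypotheses of Lemma~\ref{short-time} hold, and then iterating. First I would use the hypothesis $\norm{\tilde u}_{\dot S^{1/2}(I)}\le E$ together with the embedding $\norm{\tilde u}_{X(I)}\lesssim\norm{\tilde u}_{\dot S^{1/2}(I)}$ to conclude $\norm{\tilde u}_{X(I)}<\infty$; hence, given the $\delta=\delta(E)$ produced by Lemma~\ref{short-time}, I can partition $I$ into $J=J(E,\delta)$ consecutive intervals $I_1,\dots,I_J$ (with $I_k=[t_{k-1},t_k]$, $t_0$ the given base point, relabelled if necessary) such that $\norm{\tilde u}_{X(I_k)}\le\delta$ for each $k$, so that \eqref{delta smallness} is met on every piece.

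The core is then an induction on $k$ establishing that $u$ extends to all of $I_1\cup\cdots\cup I_k$ with quantitative control of the differences at the endpoints. The inductive hypothesis should be of the form: $\norm{u(t_k)-\tilde u(t_k)}_{\dot H_x^{1/2}}\le C(E)^k\eps$ and $\norm{u-\tilde u}_{\dot S^{1/2}(I_k)}\lesssim_E 1$ (and similarly for the nonlinearity terms). To pass from $k$ to $k+1$, I would apply Lemma~\ref{short-time} on $I_{k+1}$ with base point $t_k$: the data difference $\norm{u(t_k)-\tilde u(t_k)}_{\dot H_x^{1/2}}$ is at most $C(E)^k\eps\le E$ provided $\eps<\eps_1(E)$ is chosen small enough (depending on $E$ and $J(E)$), which gives \eqref{data Eness}; the quantity $\norm{e^{i(t-t_k)\Delta}(u(t_k)-\tilde u(t_k))}_{X(I_{k+1})}$ is controlled via the Strichartz inequality $\norm{e^{i(t-t_k)\Delta}f}_{X(I)}\lesssim\norm{f}_{\dot H_x^{1/2}}$ (a consequence of Sobolev embedding and the $\dot S^{1/2}$ Strichartz estimate), hence is $\lesssim C(E)^k\eps$, and $\norm{e}_{Y(I_{k+1})}\le\norm{e}_{Y(I)}\le\eps$; together these give \eqref{data smallness} with a relabelled smallness parameter $\eps'=C(E)^k\eps$, still below $\eps_0(E)$ for $\eps$ small. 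Lemma~\ref{short-time} then yields a solution on $I_{k+1}$ with $\norm{u-\tilde u}_{X(I_{k+1})}\lesssim\eps'$, $\norm{u-\tilde u}_{\dot S^{1/2}(I_{k+1})}\lesssim_E 1$, and in particular, summing the Duhamel formula for $w=u-\tilde u$ and using \eqref{xy strichartz}, \eqref{n to x diff} type bounds, the endpoint bound $\norm{u(t_{k+1})-\tilde u(t_{k+1})}_{\dot H_x^{1/2}}\le C(E)^{k+1}\eps$. After $J$ steps this closes the induction and produces $u$ on all of $I$ with $\norm{u-\tilde u}_{X(I)}\lesssim_E\eps$ and $\norm{u-\tilde u}_{\dot S^{1/2}(I)}\lesssim_E 1$.

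Finally, to obtain \eqref{concluuu} I would interpolate: by \eqref{e and i} applied to $w=u-\tilde u$ on each $I_k$ (or directly on $I$ after summing),
\[
\xnorms{u-\tilde u}{\frac{2(d+2)}{d-1}}{I}\lesssim\norm{u-\tilde u}_{X(I)}^{c}\,\norm{u-\tilde u}_{\dot S^{1/2}(I)}^{1-c}\lesssim_E\eps^{c}\cdot 1,
\]
which is exactly the claimed estimate with the same $c=c(d)\in(0,1)$ as in \eqref{e and i}. (One must be slightly careful summing an $\ell^{\frac{2(d+2)}{d-1}}$ quantity over the $J(E)$ pieces, but the number of pieces depends only on $E$, so the implicit constant absorption is harmless.)

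The main obstacle I anticipate is bookkeeping the geometric growth of the constants: each application of Lemma~\ref{short-time} degrades the effective smallness parameter by a factor $C(E)$, so the final $\eps_1(E)$ must be chosen after fixing $J=J(E)$, and one has to verify that at every stage the hypotheses \eqref{data Eness}, \eqref{delta smallness}, \eqref{data smallness} remain within the allowed ranges $E$, $\delta(E)$, $\eps_0(E)$. A secondary, more technical point is establishing the endpoint-to-endpoint propagation of the $\dot H_x^{1/2}$-difference bound (the step giving $\norm{u(t_{k+1})-\tilde u(t_{k+1})}_{\dot H_x^{1/2}}\lesssim_E\eps'$), which requires running the difference equation through the $\dot S^{1/2}$ Strichartz estimate using \eqref{n to x diff} and the already-established $X$- and $\dot S^{1/2}$-bounds on $w$ over $I_{k+1}$ — essentially the estimate \eqref{S difference bounded} of Lemma~\ref{short-time} refined to track the $\eps$-dependence of the initial-data term.
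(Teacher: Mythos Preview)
Your overall strategy---partition $I$ according to $\norm{\tilde u}_{X(I_k)}\le\delta$, apply Lemma~\ref{short-time} inductively, and then interpolate via \eqref{e and i}---is exactly the paper's approach, and the final interpolation step is correct. However, the mechanism you propose for propagating smallness from one subinterval to the next has a genuine gap.

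You claim as part of the induction that $\norm{w(t_k)}_{\dot H_x^{1/2}}\le C(E)^k\eps$, and you use this (via $\norm{e^{i(t-t_k)\Delta}f}_{X}\lesssim\norm{f}_{\dot H_x^{1/2}}$) to verify \eqref{data smallness} on $I_{k+1}$. But this $\dot H_x^{1/2}$ bound cannot hold under the hypotheses of the theorem. Writing the Duhamel formula for $w=u-\tilde u$ and estimating in $\dot S^{1/2}$, the error contributes $\xnorms{\nsc e}{\frac{2(d+2)}{d+4}}{I_k}$, which by \eqref{**} is only $\le E$, not $\le\eps$. The nonlinear contribution from \eqref{n to x diff} likewise contains the term $\norm{u}_{X(I_k)}^{4/(d-1)}\norm{w}_{\dot S^{1/2}(I_k)}\lesssim\delta^{4/(d-1)}\norm{w}_{\dot S^{1/2}(I_k)}$, which after absorption still leaves you with $\norm{w}_{\dot S^{1/2}(I_k)}\lesssim_E 1$, not $\lesssim_E\eps$. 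So after even one step you only get $\norm{w(t_1)}_{\dot H_x^{1/2}}\lesssim_E 1$, and your route to \eqref{data smallness} on $I_2$ breaks down. Your proposed refinement (``track the $\eps$-dependence of the initial-data term'') misses the point: the obstruction is the error term $\nsc e$, not the data.

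The repair is to avoid $\dot H_x^{1/2}$ altogether when checking \eqref{data smallness}, and instead propagate smallness directly in the $X/Y$ framework. Using Duhamel from the original base point $t_0$,
\[
e^{i(t-t_k)\Delta}w(t_k)=e^{i(t-t_0)\Delta}(u_0-\tilde u_0)-i\int_{t_0}^{t_k}e^{i(t-s)\Delta}\big[F(u)-F(\tilde u)-e\big](s)\,ds,
\]
together with \eqref{xy strichartz} gives
\[
\norm{e^{i(t-t_k)\Delta}w(t_k)}_{X(I_{k+1})}\lesssim\norm{u_0-\tilde u_0}_{\dot H_x^{1/2}}+\sum_{j=1}^{k}\norm{F(u)-F(\tilde u)}_{Y(I_j)}+\norm{e}_{Y(I)}.
\]
Now the error enters through $\norm{e}_{Y(I)}\le\eps$ (the derivative-free smallness in \eqref{***}), and the nonlinear sum is $\lesssim\sum_{j\le k}\eps_j$ by \eqref{X close}, yielding the geometric growth $\eps_{k+1}\lesssim C^k\eps$ that closes the induction. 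This is precisely why the paper structures its stability theory around the $X/Y$ spaces: smallness only needs to be---and only can be---tracked in the space without derivatives. The $\dot H_x^{1/2}$ bound on $w(t_k)$ needed for \eqref{data Eness} is then just $\lesssim_E 1$, which is all that hypothesis requires.
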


One can derive Theorem \ref{stability theorem} from Lemma \ref{short-time} in the standard fashion, namely, by applying Lemma \ref{short-time} inductively (see \cite{KV}, for example). We omit these details, but pause to point out the following: this induction will actually yield the bounds
			$$\norm{u-\tilde{u}}_{X(I)}\lesssim\eps
			\quad\text{and}
			\quad\norm{u-\tilde{u}}_{\dot{S}^{1/2}(I)}\lesssim_E 1.$$
With these bounds in hand, we then use \eqref{e and i} to see that \eqref{concluuu} holds.

\begin{remark}\label{yay me}
The smallness condition on $u_0-\tilde{u}_0$ appearing in \eqref{***} may actually be relaxed to the condition appearing in \eqref{data smallness}. In our setting, it will not be difficult to prove the stronger condition (see Lemma \ref{info about unJ}).  
\end{remark}

\begin{remark}
\label{deduce lwp}
Using arguments from \cite{cazenave book, CW}, one can establish Theorem \ref{standard lwp} for data in the imhomogeneous Sobolev space $H_x^{1/2}$. Using Theorem \ref{stability theorem}, one can then remove the assumption $u_0\in L_x^2$ \emph{a posteriori} (by approximating $u_0\in\dot{H}_x^{1/2}$ by ${H}_x^{1/2}$- functions). We omit the standard details.
\end{remark}		
	
					\section{Reduction to almost periodic solutions}
					\label{reduction section}
In this section, we sketch a proof of Theorem \ref{reduction theorem}. As described in the introduction, the key ideas come from \cite{keraani, Keraani:L2} and are well-known. Thus, we will merely outline the argument, providing full details only when our approach deviates from the usual one. We model our presentation primarily after \cite[Section 3]{KV5}. Throughout this section, we denote the nonlinearity $\vert u\vert^{\frac{4}{d-1}}u$ by $F(u)$.

We suppose that Theorem \ref{scattering theorem} fails. We then define $L:[0,\infty)\to[0,\infty]$ by
			$$L(E):=\sup\big\{S_I(u)\ \big\vert\ u:I\times\R^d\to\C\text{ solving }
			\eqref{nls}\text{ with }\norm{u}_{L_t^\infty\dot{H}_x^{1/2}(I\times\R^d)}^2
			\leq E\big\}.$$
We note that $L$ is non-decreasing, with
					$L(E)\lesssim E^{\frac{d+2}{d-1}}$ for $E$ sufficiently small
(cf. Theorem \ref{standard lwp}). Thus, there exists a unique critical threshold $E_c\in(0,\infty]$ such that $L(E)<\infty$ for $E<E_c$ and $L(E)=\infty$ for $E>E_c$. The failure of Theorem \ref{scattering theorem} implies that $0<E_c<\infty.$ 

	The key to proving Theorem \ref{reduction theorem} is the following convergence result. With this result in hand, establishing Theorem \ref{reduction theorem} is a straightforward exercise (see \cite[Section 3.2]{KV5}). 

					\begin{proposition}
					[Palais--Smale condition modulo symmetries]
					\label{palais--smale}
					
Let $d\geq 5$ and let $u_n:I_n\times\R^d\to\C$ be a sequence of solutions to \eqref{nls} such that
					$$\limsup_{n\to\infty}\norm{u_n}_{L_t^\infty\dot{H}_x^{1/2}(I_n\times\R^d)}^2=E_c.$$
Suppose $t_n\in I_n$ are such that
					\begin{equation}
					\label{blowing up}
					\lim_{n\to\infty} S_{[t_n,\sup I_n)}(u_n)=\lim_{n\to\infty} S_{(\inf I_n,t_n]}(u_n)=\infty.
					\end{equation}
Then, $\{u_n(t_n)\}$ converges along a subsequence in $\dot{H}_x^{1/2}(\R^d)/G$.\end{proposition}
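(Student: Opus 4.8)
The plan is to run the standard Kenig--Merle/Keraani minimal-blowup argument: apply the linear profile decomposition (Lemma~\ref{linear profile decomposition}) to the bounded sequence $\{u_n(t_n)\}$ in $\dot H_x^{1/2}$, and show that exactly one profile survives, that it carries all of the $\dot H_x^{1/2}$-norm, and that the remainder goes to zero strongly. By time-translating we may assume $t_n\equiv 0$, so \eqref{blowing up} says $S_{I_n}(u_n)\to\infty$ with $0$ roughly in the middle. Write $u_n(0)=\sum_{j=1}^J g_n^j e^{it_n^j\Delta}\phi^j+w_n^J$. For each $j$, define the nonlinear profile $v^j$ associated to $(\phi^j,t_n^j)$: if $t_n^j\equiv0$, $v^j$ is the maximal-lifespan solution of \eqref{nls} with $v^j(0)=\phi^j$; if $t_n^j\to\pm\infty$, $v^j$ is the solution that scatters to $e^{it\Delta}\phi^j$ as $t\to\pm\infty$ (these exist by the wave-operator part of Theorem~\ref{standard lwp}). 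Rescale via $g_n^j$ and recenter in time by $t_n^j(\lambda_n^j)^2$ to get functions $v_n^j$ that approximately solve \eqref{nls} with data close to $g_n^je^{it_n^j\Delta}\phi^j$ near $t=0$.

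The dichotomy is the heart of the matter. Suppose first that $\sup_j\norm{\phi^j}_{\dot H_x^{1/2}}^2\le E_c-2\delta$ for some $\delta>0$. Then by \eqref{decoupling} every profile has $\dot H_x^{1/2}$-norm strictly below $E_c$, so each nonlinear profile $v^j$ is global with finite scattering size, $S_\R(v^j)\le L(E_c-\delta)<\infty$, and moreover $\sum_j S_\R(v^j)<\infty$ because $\sum_j\norm{\phi^j}_{\dot H_x^{1/2}}^2<\infty$ forces all but finitely many profiles to have tiny norm (so the small-data theory gives $S_\R(v^j)\lesssim\norm{\phi^j}_{\dot H_x^{1/2}}^{2(d+2)/(d-1)}$, a summable bound). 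One then forms the approximate solution $\tilde u_n^J:=\sum_{j=1}^J v_n^j+e^{it\Delta}w_n^J$, checks via the asymptotic orthogonality \eqref{orthogonality} that the $v_n^j$'s decouple in the relevant Strichartz-type norms so that $\tilde u_n^J$ has bounded $\dot S^{1/2}$-norm and bounded $X$-norm uniformly in $J,n$, and shows the error $e_n^J:=(i\partial_t+\Delta)\tilde u_n^J-F(\tilde u_n^J)$ is small in the $Y$-norm (and $\nsc$-small) after sending $n\to\infty$ then $J\to\infty$, using \eqref{remainder} to kill the free-evolution part. This is precisely the place where the refined stability theorem (Theorem~\ref{stability theorem}) is used: the error only needs to be small in a \emph{derivative-free} space, which is what the pointwise/H\"older profile-decoupling arguments of Keraani deliver directly for $\vert u\vert^{4/(d-1)}u$ (cf.\ Lemma~\ref{info about unJ} referenced in the paper). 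Stability then gives $S_{I_n}(u_n)\lesssim 1$ uniformly, contradicting \eqref{blowing up}. Hence $\sup_j\norm{\phi^j}_{\dot H_x^{1/2}}^2=E_c$, which by \eqref{decoupling} forces exactly one profile $\phi^1$ with $\norm{\phi^1}_{\dot H_x^{1/2}}^2=E_c$, all other $\phi^j=0$, and $\norm{w_n^1}_{\dot H_x^{1/2}}\to0$.

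It remains to upgrade $\norm{w_n^1}_{\dot H_x^{1/2}}\to 0$ together with $u_n(0)=g_n^1e^{it_n^1\Delta}\phi^1+w_n^1$ into convergence of $\{u_n(0)\}$ in $\dot H_x^{1/2}/G$. If $t_n^1\equiv0$ we are done: $(g_n^1)^{-1}u_n(0)=\phi^1+(g_n^1)^{-1}w_n^1\to\phi^1$ strongly. If instead $t_n^1\to+\infty$ (the $-\infty$ case being symmetric), then $e^{it_n^1\Delta}\phi^1\to0$ weakly while $\norm{e^{it_n^1\Delta}\phi^1}_{L_{t,x}^{2(d+2)/(d-1)}([0,\infty)\times\R^d)}\to0$, so by the small-data theory the nonlinear solution with data $u_n(0)$ would have $S_{[0,\sup I_n)}(u_n)\lesssim 1$, contradicting \eqref{blowing up}; thus this case cannot occur. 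So after passing to the subsequence, $\{u_n(0)\}$ converges in $\dot H_x^{1/2}/G$, as claimed. The main obstacle is the orthogonality bookkeeping in step two: proving that the superposition $\tilde u_n^J$ of rescaled nonlinear profiles genuinely has uniformly bounded Strichartz norms and genuinely small nonlinear error in the $Y$-norm, which requires carefully pairing the asymptotic orthogonality \eqref{orthogonality} against the critical (non-conserved) $\dot H^{1/2}$ scaling in the $X$ and $Y$ norms; everything else is an application of the stability theory and the profile decomposition already set up in the excerpt.
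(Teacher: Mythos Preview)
Your overall structure is sound, and the orthogonality bookkeeping, the use of Theorem~\ref{stability theorem} on the approximate solutions, and the handling of the case $t_n^1\to\pm\infty$ all match the paper. However, there is a genuine gap in the dichotomy step. You assert that if $\sup_j\norm{\phi^j}_{\dot H_x^{1/2}}^2\le E_c-2\delta$, then ``each nonlinear profile $v^j$ is global with finite scattering size, $S_\R(v^j)\le L(E_c-\delta)<\infty$.'' This inference would be immediate in the mass- or energy-critical settings, where the critical Sobolev norm is conserved. Here it is not: the threshold function $L$ is defined in terms of $\norm{u}_{L_t^\infty\dot H_x^{1/2}}$, not $\norm{u(0)}_{\dot H_x^{1/2}}$, so knowing $\norm{\phi^j}_{\dot H_x^{1/2}}^2<E_c$ gives you no control on $\norm{v^j}_{L_t^\infty\dot H_x^{1/2}}$, and you cannot invoke $L(E_c-\delta)<\infty$.

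The paper circumvents this by reorganizing the argument. For claim~(i) (existence of a bad profile) it does \emph{not} use the induction on $E_c$ at all: it simply assumes that no profile is bad---i.e.\ every $v_n^j$ has finite forward scattering size---and derives the contradiction from that hypothesis together with the small-data theory for large $j$. The criticality of $E_c$ is only brought in for claim~(ii) (at most one profile), and only \emph{after} establishing that the $\dot H_x^{1/2}$-decoupling \eqref{decoupling} persists in time, via an adaptation of \cite[Lemma~3.3]{KV5}. The paper explicitly flags this step as nontrivial precisely because $\dot H_x^{1/2}$ is not conserved. Once time-persistence is in hand, a bad profile coexisting with any other nonzero profile or remainder would have $L_t^\infty\dot H_x^{1/2}$-norm strictly below $E_c$, and \emph{then} the definition of $E_c$ yields the contradiction. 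Your sketch is missing this time-persistence ingredient, and without it the dichotomy does not close.
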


\begin{proof}
We first translate so that each $t_n=0$ and apply Lemma \ref{linear profile decomposition} to write
								\begin{equation}
								\label{decomposition}
								u_n(0)=\sum_{j=1}^J g_n^j e^{it_n^j\Delta}\phi^j+w_n^J
								\end{equation}
along some subsequence. Recall that for each $j$, either $t_n^j\equiv 0$ or $t_n^j\to\pm\infty$. To prove Proposition \ref{palais--smale}, we need to show that there is exactly one profile $\phi^1$, with $t_n^1\equiv 0$ and $\norm{w_n^1}_{\dot{H}_x^{1/2}}\to 0$. 

First, using Theorem \ref{standard lwp}, for each $j$ we define $v^j:I^j\times\R^d\to\C$ to be the maximal-lifespan solution to \eqref{nls} such that
							$$\left\{
							\begin{array}{ll}
							v^j(0)=\phi^j & \text{if }t_n^j\equiv 0,
							\\ v^j\text{ scatters to }\phi^j\text{ as }t\to\pm\infty & \text{if }t_n^j\to\pm\infty.
							\end{array}
							\right.
							$$
Next, we define nonlinear profiles $v_n^j:I_n^j\times\R^d\to\C$ by
				$$v_n^j(t)=g_n^jv^j\big((\lambda_n^j)^{-2}t+t_n^j\big),
				\quad\text{where}\quad I_n^j=\{t:(\lambda_n^j)^{-2}t+t_n^j\in I^j\}.$$ 			
To complete the proof, we need the following three claims:

(i) There is at least one `bad' profile $\phi^{j}$, in the sense that 
					\begin{equation}
					\label{bad bad bad}
					\limsup_{n\to\infty} S_{[0,\sup I_n^{j})}(v_n^{j})=\infty.
					\end{equation}

(ii) There can then be at \emph{most} one profile (which we label $\phi^1$), and $\norm{w_n^1}_{\dot{H}_x^{1/2}}\to 0$. 

(iii) We have $t_n^1\equiv 0$.

We will provide a proof of (i) below. The proofs of (ii) and (iii) require only small variations of the analysis given for (i), so we will merely outline the arguments here. For (ii), one can adapt the argument of \cite[Lemma 3.3]{KV5} to show that the decoupling \eqref{decoupling} persists in time (this is not obvious, as the $\dot{H}_x^{1/2}$-norm is not a conserved quantity for \eqref{nls}). The critical nature of $E_c$ may then be used to preclude the possibility of multiple profiles (and to show $\norm{w_n^1}_{\dot{H}_x^{1/2}}\to 0$). For (iii), we only need to rule out the cases $t_n^1\to\pm\infty.$ To do this, one can argue by contradiction: if $t_n^1\to\pm\infty$, one can use the stability result Theorem \ref{stability theorem} (comparing $u_n$ to $e^{it\Delta}u_n(0)$) to contradict \eqref{blowing up}. See \cite[p. 391]{KV5} for more details.  

We now turn to the proof of (i). We first note that the decoupling \eqref{decoupling} implies that the $v_n^j$ are global and scatter for $j$ sufficiently large, say for $j\geq J_0$; indeed, for $j$ sufficiently large, the $\dot{H}_x^{1/2}$-norm of $\phi^j$ must be below the small-data threshold described in Theorem \ref{standard lwp}. Thus, we need to show that there is at least one bad profile $\phi^j$ (in the sense of \eqref{bad bad bad}) in the range $1\leq j<J_0$. 

Suppose towards a contradiction that there are no bad profiles. By the blowup criterion of Theorem \ref{standard lwp}, this immediately implies that $\sup I_n^j=\infty$ for all $j$ and for all $n$ sufficiently large. In fact, we claim that we have the following:						
							\begin{equation}
							\label{summable strichartz norm}
							\limsup_{J\to\infty}\limsup_{n\to\infty} 
							\sum_{j=1}^J\norm{v_n^j}_{\dot{S}^{1/2}([0,\infty))}^2\lesssim_{E_c} 1.
							\end{equation}
Indeed, for $\eta>0$, the decoupling \eqref{decoupling} implies the existence of $J_1=J_1(\eta)$ such that 
				$$\sum_{j>J_1}\norm{\phi^j}_{\dot{H}_x^{1/2}}^2\lesssim\eta.$$
Thus, choosing $\eta$ smaller than the small-data threshold, Strichartz and a standard bootstrap argument give
				$$\sum_{j>J_1}\norm{v_n^j}_{\dot{S}^{1/2}([0,\infty))}^2\lesssim
				\sum_{j>J_1}\norm{\phi^j}_{\dot{H}_x^{1/2}}^2\lesssim\eta.$$
As the $v_n^j$ satisfy $S_{[0,\infty)}(v_n^j)\lesssim 1$ for $n$ large, we may use Strichartz and another bootstrap argument to see $\norm{v_n^j}_{\dot{S}^{1/2}}\lesssim1$ for $1\leq j\leq J_1$ and $n$ large. Thus, we conclude that \eqref{summable strichartz norm} holds.

Next, using the fact that there are no bad profiles, together with the orthogonality condition \eqref{orthogonality}, one can use the arguments of \cite{keraani} to arrive at the following
							\begin{lemma}
							[Orthogonality]
							\label{orthogonality lemma}
							For $j\neq k$, we have
		\begin{align}
			\bigg[\xonorms{v_n^jv_n^k&}{\frac{d+2}{d-1}}+\xonorm{v_n^j v_n^k}{\frac{4(d+1)}{d+3}}{\frac{2d(d+1)}{2d^2-d-5}}			
			+\xonorms{(\nsc v_n^j)(\nsc v_n^k)}{\frac{d+2}{d}} \nonumber
			\\ &+ \label{o}
			\xonorms{\big(\nsc F(v_n^j)\big)\big(\nsc F(v_n^k)\big)^{\frac{d}{d+4}}}{1}\bigg]\to 0\quad\text{as}\quad n\to\infty,
		\end{align}
where all spacetime norms are taken over $[0,\infty)\times\R^d$. 
							\end{lemma}

We now wish to use \eqref{summable strichartz norm} and \eqref{o}, together with Theorem \ref{stability theorem}, to deduce a bound on the scattering size of the $u_n$, thus contradicting \eqref{blowing up}. To this end, we define approximate solutions to \eqref{nls} and collect the information we need about them in the following 
								\begin{lemma}
								\label{info about unJ}
The approximate solutions $u_n^J(t):=\sum_{j=1}^J v_n^j(t)+e^{it\Delta}w_n^J$ satisfy
			\begin{align}					
			&\limsup_{J\to\infty}\limsup_{n\to\infty} 
			\norm{u_n(0)-u_n^J(0)}_{\dot{H}_x^{1/2}}=0,						\label{unJ0}
			\\
			&\limsup_{J\to\infty}\limsup_{n\to\infty} 
			S_{[0,\infty)}(u_n^J)\lesssim_{E_c} 1,						\label{unJ scattering size}
			\\
			&\limsup_{J\to\infty}\limsup_{n\to\infty}  
			\norm{u_n^J}_{\dot{S}^{1/2}([0,\infty))}\lesssim_{E_c} 1.			\label{unJ strichartz norm}
			\end{align}
The errors
			$e_n^J:=(i\partial_t+\Delta)u_n^J-F(u_n^J)=\sum_{j=1}^J F(v_n^j)-F(u_n^J)$
satisfy		
				\begin{align}							
				&\limsup_{J\to\infty}\limsup_{n\to\infty}
				\xnorms{\nsc e_n^J}{\frac{2(d+2)}{d+4}}{[0,\infty)}\lesssim_{E_c} 1,			\label{enJ bounded}
				\\ 
				&\limsup_{J\to\infty}\limsup_{n\to\infty}\xnorm{e_n^J}{\frac{4(d+1)}{d+3}}{\frac{2(d+1)}{d+3}}{[0,\infty)}=0.	 \label{enJ to zero11}
				\end{align}
									\end{lemma}
\begin{remark}\label{talk}
It is here that we see the benefit of the refined stability result Theorem~\ref{stability theorem}. In particular, to apply Theorem \ref{stability theorem} we only need to exhibit smallness of the $e_n^J$ in the space appearing in \eqref{enJ to zero11}. As this space contains no derivatives, we can achieve this simply by relying on pointwise estimates.
\end{remark}			
			
\begin{proof}
We first note that \eqref{unJ0} follows from the construction of the $v^j$.

Next, we turn to \eqref{unJ scattering size}. To begin, we notice that by Sobolev embedding and the fact that $\frac{2(d+2)}{d-1}\geq 2$, we may deduce from \eqref{summable strichartz norm} that
							$$
							\sum_{j\geq 1} S_{[0,\infty)}(v_n^j)\lesssim_{E_c} 1.
							$$
Thus, recalling \eqref{remainder}, we see that to prove \eqref{unJ scattering size} it will suffice to show
				\begin{equation}
				\label{scattering difference}
				\limsup_{J\to\infty}\limsup_{n\to\infty}
				\bigg\vert S_{[0,\infty)}\bigg(\sum_{j=1}^J v_n^j\bigg)-\sum_{j=1}^JS_{[0,\infty)}(v_n^j)\bigg\vert
				=0.
				\end{equation}
To this end, we fix $J$ and use H\"older's inequality, \eqref{summable strichartz norm}, and \eqref{o} to see
	\begin{align*}
	\bigg\vert S_{[0,\infty)}\bigg(\sum_{j=1}^J v_n^j\bigg)-\sum_{j=1}^JS_{[0,\infty)}(v_n^j)\bigg\vert
	&\lesssim_J \sum_{j\neq k}\xonorms{v_n^j}{\frac{2(d+2)}{d-1}}^{\frac{6}{d-1}}\xonorms{v_n^jv_n^k}{\frac{d+2}{d-1}}
	\to 0
	\end{align*}
as $n\to\infty$. This establishes \eqref{scattering difference} and completes the proof of \eqref{unJ scattering size}.

Let us next turn to \eqref{enJ bounded} (which we will later use in the proof of \eqref{unJ strichartz norm}). To begin, we will derive the bound
					\begin{equation}
					\label{a unJ strichartz bound}
					\limsup_{J\to\infty}\limsup_{n\to\infty}
					\xonorms{\nsc u_n^J}{\frac{2(d+2)}{d}}^\frac{2(d+2)}{d}\lesssim_{E_c} 1.
					\end{equation}
As $w_n^J\in \dot{H}_x^{1/2},$ it will suffice to show 
					\begin{equation}
					\label{a nice sum of strichartz norms}
					\limsup_{J\to\infty}\limsup_{n\to\infty}
					\xonorms{\sum_{j=1}^J\nsc v_n^j}{\frac{2(d+2)}{d}}^\frac{2(d+2)}{d}\lesssim_{E_c} 1.
					\end{equation} 
To this end, we first note that as $\frac{2(d+2)}{d}\geq 2$, we may use \eqref{summable strichartz norm} to see 
					\begin{equation}
					\label{a nicer sum}
					\limsup_{J\to\infty}\limsup_{n\to\infty}
					\sum_{j=1}^J\xonorms{\nsc v_n^j}{\frac{2(d+2)}{d}}^{\frac{2(d+2)}{d}}\lesssim_{E_c} 1.
					\end{equation}
On the other hand, for fixed $J$, we can use \eqref{summable strichartz norm} and \eqref{o} to see
\begin{align*}
	\bigg\vert \xonorms{&\sum_{j=1}^J \nsc v_n^j}{\frac{2(d+2)}{d}}^{\frac{2(d+2)}{d}}
	-\sum_{j=1}^J\xonorms{\nsc v_n^j}{\frac{2(d+2)}{d}}^{\frac{2(d+2)}{d}}\bigg\vert
	\\ &\lesssim_J\sum_{j\neq k} \xonorms{\nsc v_n^j}{\frac{2(d+2)}{d}}^{\frac{4}{d}}
	\xonorms{(\nsc v_n^j)(\nsc v_n^k)}{\frac{d+2}{d}}
	\to 0\quad\text{as}\quad n\to\infty.
	\end{align*}
Then \eqref{a nicer sum} implies \eqref{a nice sum of strichartz norms}, which in turn gives \eqref{a unJ strichartz bound}.

Next, by the fractional chain rule, \eqref{unJ scattering size}, and \eqref{a unJ strichartz bound}, we get
				\begin{align}
				 \xonorms{\nsc F(u_n^J)}{\frac{2(d+2)}{d+4}}
				 \lesssim
				\xonorms{u_n^J}{\frac{2(d+2)}{d-1}}^{\frac{4}{d-1}}\xonorms{\nsc u_n^J}{\frac{2(d+2)}{d}}\lesssim_{E_c}1
				\label{enJ part one}
				\end{align}  
as $n,J\to\infty$, which handles one of the terms appearing in \eqref{enJ bounded}. 

To complete the proof of \eqref{enJ bounded}, it remains to show
				$$
				\limsup_{J\to\infty}\limsup_{n\to\infty}
				\xonorms{\sum_{j=1}^J \nsc F(v_n^j)}{\frac{2(d+2)}{d+4}}^{\frac{2(d+2)}{d+4}}\lesssim_{E_c} 1.
				$$
We claim it will suffice to establish 
						\begin{equation}
						\label{unJ second piece}
						\lim_{J\to\infty}\limsup_{n\to\infty}
						\sum_{j=1}^J \xonorms{\nsc F(v_n^j)}{\frac{2(d+2)}{d+4}}^{\frac{2(d+2)}{d+4}}\lesssim_{E_c} 1.
						\end{equation}
Indeed, for fixed $J$, we have by \eqref{o} 
	\begin{align*}
	\bigg\vert \xonorms{\sum_{j=1}^J& \nsc F(v_n^j)}{\frac{2(d+2)}{d+4}}^{\frac{2(d+2)}{d+4}}
	-\sum_{j=1}^J \xonorms{\nsc F(v_n^j)}{\frac{2(d+2)}{d+4}}^{\frac{2(d+2)}{d+4}}\bigg\vert
	\\ &\lesssim_J \sum_{j\neq k}\xonorms{ \nsc F(v_n^j)\, \vert\nsc F(v_n^k)\vert^{\frac{d}{d+4}} }{1}\to 0
	\quad\text{as }n\to\infty.
	\end{align*}
	
To establish \eqref{unJ second piece} and thereby complete the proof of \eqref{enJ bounded}, we use the fractional chain rule and Sobolev embedding to see			
	\begin{align*}
	\sum_{j=1}^J \xonorms{\nsc F(v_n^j)}{\frac{2(d+2)}{d+4}}^{\frac{2(d+2)}{d+4}}
	&\lesssim\sum_{j=1}^J\big(\xonorms{v_n^j}{\frac{2(d+2)}{d-1}}^{\frac{4}{d-1}}\xonorms{\nsc v_n^j}{\frac{2(d+2)}{d}}\big)^{\frac{2(d+2)}{d+4}}
	\\ &\lesssim \sum_{j=1}^J\norm{v_n^j}_{\dot{S}^{1/2}}^{\frac{2(d+2)(d+3)}{(d+4)(d-1)}}.
	\end{align*} 		
Then \eqref{unJ second piece} follows from \eqref{summable strichartz norm} and the fact that $\frac{2(d+2)(d+3)}{(d+4)(d-1)}\geq 2$.

Now  \eqref{unJ strichartz norm} follows from an application of Strichartz, \eqref{enJ bounded} and \eqref{enJ part one}.

	It remains to establish \eqref{enJ to zero11}. We begin by rewriting
			$$
			e_n^J=\bigg[\sum_{j=1}^J F(v_n^j)-F\bigg(\sum_{j=1}^J v_n^j\bigg)\bigg]	
			+\big[F(u_n^J-e^{it\Delta}w_n^J)-F(u_n^J)\big]=:(e_n^J)_1+(e_n^J)_2.						
			$$

	We first fix $J$ and use H\"older, Sobolev embedding, \eqref{summable strichartz norm}, and \eqref{o} to see
		\begin{align*}
		\xonorm{(e_n^J)_1}{\frac{4(d+1)}{d+3}}{\frac{2(d+1)}{d+3}}
		&\lesssim_J \sum_{j\neq k} \xonorm{\, \vert v_n^j v_n^k\vert^{\frac{4}{d-1}}
							\vert v_n^j\vert^{\frac{d-5}{d-1}}\,}{\frac{4(d+1)}{d+3}}{\frac{2(d+1)}{d+3}} \nonumber
		\\ &\lesssim_J \sum_{j\neq k}\xonorm{v_n^j}{\frac{4(d+1)}{d+3}}{\frac{2d(d+1)}{d^2-d-4}}^{\frac{d-5}{d-1}}
				\xonorm{v_n^j v_n^k}{\frac{4(d+1)}{d+3}}{\frac{2d(d+1)}{2d^2-d-5}}^{\frac{4}{d-1}}\to 0
		\end{align*}
as $n\to\infty$. Next, we note that we have the pointwise estimate
			$$\vert(e_n^J)_2\vert\lesssim \vert e^{it\Delta}w_n^J\vert
			\vert f_n^J\vert^{\frac{4}{d-1}},$$
where $f_n^J:= u_n^J+ e^{it\Delta}w_n^J$ satisfies
				$\norm{f_n^J}_{\dot{S}^{1/2}}\lesssim_{E_c} 1$
as $n,J\to\infty$
(cf. \eqref{unJ strichartz norm} and the fact that $w_n^J\in\dot{H}_x^{1/2}$). Thus, we can use H\"older, Strichartz, Sobolev embedding, $w_n^J\in\dot{H}_x^{1/2}$, and \eqref{remainder} to see
	\begin{align*}
	\xonorm{(e_n^J)_2}{\frac{4(d+1)}{d+3}}{\frac{2(d+1)}{d+3}}
	&\lesssim\xonorm{e^{it\Delta}w_n^J}{\frac{4(d+1)}{d-1}}{\frac{2(d+1)}{d-1}}
	\xonorm{f_n^J}{\frac{4(d+1)}{d-1}}{\frac{2(d+1)}{d-1}}^{\frac{4}{d-1}} \nonumber
	\\ &\lesssim \xonorms{e^{it\Delta}w_n^J}{\frac{2(d+2)}{d-1}}^{\frac{d+2}{2(d+1)}}\norm{w_n^J}_{\dot{H}_x^{1/2}}^{\frac{d}{2(d+1)}}\norm{f_n^J}_{\dot{S}^{1/2}}^{\frac{4}{d-1}}\to 0\quad\text{as }n,J\to\infty.
	\end{align*}

Combining the estimates for $(e_n^J)_1$ and $(e_n^J)_2$, we conclude that \eqref{enJ to zero11} holds. 
\end{proof}

Using Lemma \ref{info about unJ}, we may apply Theorem \ref{stability theorem} to deduce that $S_{[0,\infty)}(u_n)\lesssim_{E_c} 1$ for $n$ large, contradicting \eqref{blowing up}. We conclude that there is at least one bad profile, that is, claim (i) holds. This completes the proof of Proposition~\ref{palais--smale} and Theorem~\ref{reduction theorem}. 
\end{proof}									
									\section{Finite time blowup}\label{finite section}
	In this section, we use Proposition \ref{reduced duhamel proposition}, Strichartz estimates, and conservation of mass to preclude the existence of almost periodic solutions as in Theorem \ref{further reduction} with $T_{max}<\infty$. 

						\begin{theorem}[No finite time blowup]
						\label{finite time blowup}
Let $d\geq 5$. There are no almost periodic solutions $u:[0,T_{max})\times\R^d\to\C$ to \eqref{nls} with $T_{max}<\infty$ and $S_{[0,T_{max})}=\infty.$
					\end{theorem}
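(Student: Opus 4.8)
The plan is to argue by contradiction: suppose such an almost periodic solution $u:[0,T_{max})\times\R^d\to\C$ exists with $T_{max}<\infty$. The main idea is to use the finiteness of $T_{max}$ together with the local constancy property (Lemma \ref{local constancy}) to force the frequency scale $N(t)\to\infty$ as $t\to T_{max}$; this is because the intervals $[t_0-\delta N(t_0)^{-2},t_0+\delta N(t_0)^{-2}]$ must all fit inside $[0,T_{max})$, so $N(t)^{-2}\lesssim_u T_{max}-t$, i.e. $N(t)\gtrsim_u (T_{max}-t)^{-1/2}\to\infty$. Combined with the almost periodicity (Definition \ref{almost periodic definition}), this means that for $t$ near $T_{max}$, $u(t)$ is concentrated in physical space in a ball of radius $\sim C(\eta)/N(t)\to 0$ (after recentering at $x(t)$), and its $\dot H_x^{1/2}$-mass lives at frequencies $\gtrsim N(t)\to\infty$.

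The key quantitative step is to show that the solution has zero mass, where mass means $\norm{u(t)}_{L_x^2}^2$. Although $u$ need not a priori lie in $L_x^2$, the point is to estimate $\norm{P_{\leq N}u(t)}_{L_x^2}$ for a fixed (or slowly varying) frequency cutoff and show it tends to $0$. I would start from the reduced Duhamel formula (Proposition \ref{reduced duhamel proposition}): for $t\in[0,T_{max})$,
$$P_{\leq N}u(t)=i\lim_{T\to T_{max}}\int_t^T e^{i(t-s)\Delta}P_{\leq N}F(u)(s)\,ds,$$
with the limit in the weak $\dot H_x^{1/2}$ topology. Then by Strichartz (the dual form on the interval $[t,T_{max})$), Bernstein, and Hölder, one estimates $\norm{P_{\leq N}u(t)}_{L_x^2}$ by $\norm{P_{\leq N}F(u)}$ in a suitable dual Strichartz space over $[t,T_{max})$, which is controlled by a positive power of the $L_x^{2d/(d-1)}$-mass of $u$ on the concentration ball (using Remark \ref{arzela ascoli} to know this is nonzero but, more importantly, using almost periodicity to bound the frequency-localized piece). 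Because $N(s)\to\infty$, choosing $N$ fixed, $P_{\leq N}u(s)$ carries vanishingly little $\dot H_x^{1/2}$-norm, and one obtains an estimate of the form $\norm{P_{\leq N}u(t)}_{L_x^2}\to 0$ as $t\to T_{max}$, uniformly after the relevant interpolations. Letting $N\to\infty$ last, one concludes $u(t)\to 0$ in (a localized) $L_x^2$; combined with conservation of mass — which holds because $u$ is an $H_x^{1/2}$ solution on each compact subinterval — this forces $\norm{u(t)}_{L_x^2}\equiv 0$, hence $u\equiv 0$. But then $S_{[0,T_{max})}(u)=0\neq\infty$, a contradiction.

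To organize the argument cleanly I would: (1) establish $N(t)\to\infty$ as $t\to T_{max}$ from Lemma \ref{local constancy} and $T_{max}<\infty$; (2) show $u\in L_t^\infty L_x^2$ is \emph{not} assumed, so work with $P_{\leq N}u$ and prove $\limsup_{t\to T_{max}}\norm{P_{\leq N}u(t)}_{L_x^2}\lesssim_u N^{-1/2}$ (or some negative power of $N$) via the reduced Duhamel formula, Strichartz, Bernstein, and the nonlinear estimates of Lemma \ref{nonlinear estimate lemma} together with the $\dot S^{1/2}$-bound coming from almost periodicity plus a local smallness input; (3) deduce that $u(t)$ has uniformly small mass near $T_{max}$; (4) invoke conservation of mass on compact subintervals to propagate this smallness to all of $[0,T_{max})$, forcing $u\equiv 0$; (5) contradict $S_{[0,T_{max})}(u)=\infty$.

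The main obstacle I anticipate is step (2): making rigorous the claim that the frequency-localized Duhamel integral is small, since the nonlinearity $F(u)$ is not itself localized in frequency, so $P_{\leq N}F(u)$ mixes high frequencies of $u$, and one must exploit that the high-frequency part of $u$ has controlled Strichartz norm (from almost periodicity, after splitting $[t,T_{max})$ into the local-constancy subintervals on which $N$ is comparable to a constant) while carefully tracking the dependence on the cutoff $N$ and on the window $[t,T_{max})$. The interpolation/Bernstein bookkeeping needed to convert "$\dot H_x^{1/2}$-mass escapes to high frequency" into "low-frequency $L_x^2$-mass vanishes" is the technical heart; everything else (the contradiction with $S_{[0,T_{max})}(u)=\infty$, the use of mass conservation) is routine. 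I expect this to follow the template of \cite[Proposition 5.23 and Theorem 1.9]{KV} or analogous treatments, adapted to the $\dot H^{1/2}$-critical setting using the derivative-free stability framework developed in Section \ref{stability section}.
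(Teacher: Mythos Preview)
Your overall strategy---reduced Duhamel formula, Strichartz, Bernstein, then mass conservation to force $u\equiv 0$---matches the paper exactly. However, you are overcomplicating the key quantitative step and, as a result, anticipating an obstacle that never arises.

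The paper does \emph{not} use $N(t)\to\infty$, the local constancy lemma, or any concentration input from almost periodicity beyond the reduced Duhamel formula itself. Smallness in your step~(2) comes simply from \emph{H\"older in time} on the short interval $[t,T_{max})$: from Proposition~\ref{reduced duhamel proposition} and the endpoint Strichartz estimate,
\[
\norm{P_N u(t)}_{L_x^2}\lesssim \xnorm{P_N F(u)}{2}{\frac{2d}{d+2}}{[t,T_{max})}
\lesssim (T_{max}-t)^{1/2}\xonorm{P_N F(u)}{\infty}{\frac{2d}{d+2}},
\]
and then Bernstein (gaining $N^{1/2}$ by passing to $L_x^{\frac{2d}{d+3}}$), H\"older, and Sobolev embedding give
\[
\norm{P_N u(t)}_{L_x^2}\lesssim (T_{max}-t)^{1/2}N^{1/2}\norm{u}_{L_t^\infty\dot H_x^{1/2}}^{\frac{d+3}{d-1}}.
\]
Summing over dyadic scales yields the same bound for $P_{\leq N}u(t)$. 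Combining with the trivial Bernstein bound $\norm{P_{>N}u(t)}_{L_x^2}\lesssim_u N^{-1/2}$, one chooses first $N$ large and then $t$ close to $T_{max}$ to make $\norm{u(t)}_{L_x^2}$ arbitrarily small; mass conservation finishes.

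So the ``frequency-mixing'' difficulty you flag is illusory: one never needs $P_{\leq N}F(u)$ to be small because $u$ lives at high frequency; one only needs $F(u)$ to be \emph{bounded} in $L_t^\infty L_x^{\frac{2d}{d+3}}$ (immediate from $u\in L_t^\infty\dot H_x^{1/2}$ and Sobolev), with the smallness supplied entirely by the factor $(T_{max}-t)^{1/2}$. Your detour through $N(t)\to\infty$ and subinterval decompositions could presumably be made to work, but it is unnecessary.
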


\begin{proof}
Suppose that $u$ were such a solution. Then, for $t\in [0,T_{max})$ and $N>0$, Proposition \ref{reduced duhamel proposition}, Strichartz, H\"older, Bernstein, and Sobolev embedding give
	\begin{align*}
	\norm{P_Nu(t)}_{L_x^2}&\lesssim\xnorm{P_N(\vert u\vert^{\frac{4}{d-1}}u)}{2}{\frac{2d}{d+2}}{[t,T_{max})}
	\\ &\lesssim(T_{max}-t)^{1/2}N^{1/2}\xonorm{\vert u\vert^{\frac{4}{d-1}}u}{\infty}{\frac{2d}{d+3}}
	\\ &\lesssim(T_{max}-t)^{1/2}N^{1/2}\norm{u}_{L_t^\infty\dot{H}_x^{1/2}}^{\frac{d+3}{d-1}}.
	\end{align*}
As $u\in L_t^\infty\dot{H}_x^{1/2}$, we deduce
	\begin{align}
	\label{mass1}
	\norm{P_{\leq N}u(t)}_{L_x^2}\lesssim_u (T_{max}-t)^{1/2}N^{1/2}\quad\text{for all }t\in I \text{ and }N>0. 
	\end{align}
On the other hand, an application of Bernstein gives
	\begin{align}
	\label{mass2}
	\norm{P_{>N}u}_{L_t^\infty L_x^2}\lesssim N^{-1/2}\norm{u}_{L_t^\infty\dot{H}_x^{1/2}}\lesssim_u N^{-1/2}\quad\text{for all }N>0.
	\end{align}
	
	We now let $\eta>0$. We choose $N$ large enough that $N^{-1/2}<\eta$, and subsequently choose $t$ close enough to $T_{max}$ that $(T_{max}-t)^{1/2} N^{1/2}<\eta.$ Combining \eqref{mass1} and \eqref{mass2}, we then get
		$\norm{u(t)}_{L_x^2}\lesssim_u \eta.$ 
		
		As $\eta$ was arbitrary and mass is conserved, we conclude $\norm{u(t)}_{L_x^2}=0$ for all $t\in[0,T_{max})$. Thus $u\equiv 0$, which contradicts the fact that $u$ blows up.			
\end{proof}

								\section{The Lin--Strauss Morawetz inequality}
								\label{lin--strauss section}
In this section, we use the Lin--Strauss Morawetz inequality to preclude the existence of almost periodic solutions as in Theorem \ref{further reduction} such that $T_{max}=\infty.$						
								
								\begin{proposition}[Lin--Strauss Morawetz inequality, \cite{LS}]
								\label{lin--strauss}
Let $d\geq 3$ and let $u:I\times\R^d\to\C$ be a solution to $(i\partial_t+\Delta)u=\vert u\vert^p u$. Then 
								\begin{equation}\label{lsi}
								\int_I\int_{\R^d}
								\frac{\vert u(t,x)\vert^{p+2}}{\vert x\vert}
								\,dx\,dt\lesssim\norm{u}_{L_t^\infty\dot{H}_x^{1/2}(I\times\R^d)}^2.
								\end{equation}
								\end{proposition}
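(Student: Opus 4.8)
The plan is to prove the Lin--Strauss Morawetz inequality \eqref{lsi} via the standard Morawetz action argument, using the weight $a(x)=|x|$. First I would introduce the Morawetz action
\[
M(t):=2\,\Im\int_{\R^d}\overline{u(t,x)}\,\nabla u(t,x)\cdot\nabla a(x)\,dx,
\]
where $a(x)=|x|$, so that $\nabla a(x)=x/|x|$. Since $u$ is a solution to $(i\partial_t+\Delta)u=|u|^p u$, a direct computation (integration by parts, using the equation to replace time derivatives of $u$) gives the virial-type identity
\[
\frac{d}{dt}M(t)=\int_{\R^d}\Big(-\Delta\Delta a\,|u|^2+4\,\partial_j\partial_k a\,\Re(\partial_j u\,\overline{\partial_k u})+\tfrac{2p}{p+2}(-\Delta a)\,|u|^{p+2}\Big)\,dx.
\]
For $a(x)=|x|$ one has $\Delta a=(d-1)/|x|\ge 0$, $-\Delta\Delta a=(d-1)(d-3)\,c_d\,\delta_0$ (nonnegative for $d\ge 3$, and identically zero for $d=3$), and the Hessian $\partial_j\partial_k a$ is positive semidefinite (it is $|x|^{-1}$ times the projection orthogonal to $x$). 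Hence every term on the right-hand side is nonnegative, and in particular
\[
\frac{d}{dt}M(t)\ \gtrsim\ \int_{\R^d}\frac{|u(t,x)|^{p+2}}{|x|}\,dx.
\]

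Next I would integrate this inequality over the interval $I$ (first over a compact subinterval $[t_1,t_2]\subset I$, then take a limit), obtaining
\[
\int_{t_1}^{t_2}\int_{\R^d}\frac{|u(t,x)|^{p+2}}{|x|}\,dx\,dt\ \lesssim\ \sup_{t\in I}|M(t)|.
\]
It then remains to bound $|M(t)|$ by $\|u\|_{L_t^\infty\dot H_x^{1/2}}^2$ uniformly in $t$. Since $|\nabla a|=1$ is bounded, $|M(t)|\lesssim |\langle u(t),\,(x/|x|)\cdot\nabla u(t)\rangle|$; writing this pairing on the Fourier side, or equivalently using that $(x/|x|)\cdot\nabla = \sum_j R_j' \,|\nabla|$ type considerations, one identifies it as a bounded bilinear form controlled by placing half a derivative on each factor. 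Concretely, $\bigl|\Im\int \overline{u}\,\tfrac{x}{|x|}\cdot\nabla u\,dx\bigr|\lesssim \|\,|\nabla|^{1/2}u\|_{L^2}\,\big\|\,|\nabla|^{1/2}\big(\tfrac{x}{|x|}\cdot\nabla|\nabla|^{-1}u\big)\big\|_{L^2}$, and since $\tfrac{x}{|x|}\cdot\nabla|\nabla|^{-1}$ is (up to constants) a composition of a zeroth-order radial multiplier with Riesz transforms, it is bounded on $L^2$ and commutes suitably with $|\nabla|^{1/2}$; this yields $|M(t)|\lesssim \|u(t)\|_{\dot H_x^{1/2}}^2$. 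Taking the supremum over $t\in I$ and then letting $[t_1,t_2]\uparrow I$ gives \eqref{lsi}.

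The main obstacle I anticipate is making the formal virial computation rigorous at the singularity of $a(x)=|x|$ at the origin, where $\Delta a$ and $\Delta^2 a$ are not classical functions in dimension $3$ (there $\Delta^2|x|$ is a negative multiple of $\delta_0$, which must be handled by the sign considerations above — in $d=3$ it drops out, and for $d\ge 4$ the distributional term has the favorable sign). The standard remedy is to regularize, replacing $|x|$ by a smooth approximation such as $a_\eps(x)=\sqrt{|x|^2+\eps^2}$, carry out all integrations by parts for $a_\eps$ (where every term is genuinely smooth and the relevant convexity properties persist), obtain the estimate with $\eps$-dependent but uniformly bounded constants, and then pass to the limit $\eps\to0$ using dominated convergence and the a priori $L_t^\infty\dot H_x^{1/2}$ bound. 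A secondary technical point is justifying the differentiation of $M(t)$ under the integral and the integration by parts; this is standard for solutions in the Strichartz class guaranteed by Definition~\ref{solution definition}, and can be reduced to smooth solutions by the local well-posedness and stability theory already established (Theorems~\ref{standard lwp} and \ref{stability theorem}), so I would only remark on it rather than belabor the approximation argument.
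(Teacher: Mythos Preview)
The paper does not supply its own proof of this proposition; it simply records the estimate and cites Lin--Strauss \cite{LS}. Your outline is the standard virial/Morawetz argument and is essentially correct, so there is nothing substantive to compare.

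Two small inaccuracies are worth flagging. First, your formula for $-\Delta^2|x|$ is garbled: in $d=3$ one has $-\Delta^2|x|=8\pi\delta_0$ (a genuine, favorable delta contribution, not ``identically zero''), while for $d\geq 4$ one gets the locally integrable function $(d-1)(d-3)|x|^{-3}$, not a multiple of $\delta_0$. Either way the sign is nonnegative, so your conclusion survives. Second, the bound $|M(t)|\lesssim\|u(t)\|_{\dot H^{1/2}}^2$ is correct, but your justification is off: $\tfrac{x}{|x|}\cdot\nabla|\nabla|^{-1}$ is \emph{not} a Fourier multiplier (the factor $x_j/|x|$ is multiplication in physical space), so it does not ``commute suitably'' with $|\nabla|^{1/2}$ in the way you suggest. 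The clean route is to note that multiplication by $x_j/|x|$ is bounded on $L^2$ trivially and on $\dot H^1$ by Hardy's inequality (since $|\nabla(x_j/|x|)|\lesssim|x|^{-1}$), hence on $\dot H^{1/2}$ by interpolation; combined with the $L^2$-boundedness of the Riesz transforms $R_j=\partial_j|\nabla|^{-1}$, this gives $|\langle u,\tfrac{x}{|x|}\cdot\nabla u\rangle|\lesssim\|u\|_{\dot H^{1/2}}^2$.
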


As in \cite{KM}, we will use this estimate to establish the following	
								\begin{theorem}
								\label{infinite time}
Let $d\geq 5$. There are no almost periodic solutions $u:[0,\infty)\times\R^d\to\C$ to \eqref{nls} such that $u$ blows up forward in time, $\inf_{t\in[0,\infty)}N(t)\geq 1$, and $\vert x(t)\vert\lesssim_u \int_0^t N(s)\,ds$ for all $t\geq 0$.
								\end{theorem}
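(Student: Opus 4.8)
\textbf{Proof proposal for Theorem \ref{infinite time}.}

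The plan is to derive a contradiction by showing that the solution $u$ from the statement contributes infinitely to the Lin--Strauss Morawetz integral \eqref{lsi}, which is finite because $u\in L_t^\infty\dot H_x^{1/2}$. First I would recall from Remark \ref{arzela ascoli} that there exists $C(u)>0$ such that the mass of $u(t)$ on the ball $\{|x-x(t)|\le C(u)/N(t)\}$ is bounded below, uniformly in $t$; equivalently, $\int_{|x-x(t)|\le C(u)/N(t)}|u(t,x)|^{\frac{2d}{d-1}}\,dx\gtrsim_u 1$. Setting $p=\frac{4}{d-1}$ so that $p+2=\frac{2(d+1)}{d-1}$, by H\"older on this ball one gets a lower bound of the form
				$$\int_{|x-x(t)|\le C(u)/N(t)}|u(t,x)|^{\frac{2(d+1)}{d-1}}\,dx\gtrsim_u N(t)^{-\alpha}$$
for an explicit $\alpha=\alpha(d)$ obtained by interpolating the $L^{\frac{2d}{d-1}}$ lower bound against the $\dot H^{1/2}\hookrightarrow L^{\frac{2d}{d-1}}$ upper bound on the same set (or more directly, against the uniform $\dot H^{1/2}$ bound via Sobolev).

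Next I would insert the weight $|x|^{-1}$ into the Morawetz integrand. For this, the hypothesis $|x(t)|\lesssim_u\int_0^t N(s)\,ds$ together with $\inf N\ge 1$ controls the location of the concentration ball: on the ball $\{|x-x(t)|\le C(u)/N(t)\}$ we have $|x|\lesssim_u 1/N(t)+\int_0^t N(s)\,ds\lesssim_u \int_0^t N(s)\,ds + 1$, say $|x|\lesssim_u \jap{t}\cdot\sup_{[0,t]}N$ — more carefully, I would use $|x|\lesssim_u \int_0^tN(s)\,ds+N(t)^{-1}$, and since $N(t)\ge1$ this is $\lesssim_u \int_0^tN(s)\,ds+1$. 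Therefore on the concentration ball $\frac{1}{|x|}\gtrsim_u \big(\int_0^tN(s)\,ds+1\big)^{-1}$. Restricting the left side of \eqref{lsi} to these balls and combining with the mass lower bound gives
				$$\infty>\int_0^\infty\int_{\R^d}\frac{|u(t,x)|^{\frac{2(d+1)}{d-1}}}{|x|}\,dx\,dt
				\gtrsim_u \int_0^\infty \frac{N(t)^{-\alpha}}{\int_0^tN(s)\,ds+1}\,dt.$$

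The final step is to show the right-hand integral diverges, using that $u$ blows up forward in time. Blowup of $S_{[0,\infty)}(u)$ together with the local constancy Lemma \ref{local constancy} and a covering of $[0,\infty)$ by characteristic subintervals $I_k$ (on which $N\sim N_k$ and $|I_k|\sim N_k^{-2}$) forces $\int_0^\infty N(t)\,dt=\infty$; otherwise $N$ would be integrable, and one checks via Strichartz on the whole line (bounding $\||u|^{\frac{4}{d-1}}u\|$ in an appropriate dual Strichartz space by powers of $\int N$, exploiting that the frequency support is essentially $\{|\xi|\lesssim N(t)\}$) that the scattering size would be finite, contradicting blowup. Granting $\int_0^\infty N(t)\,dt=\infty$, I substitute $\tau=\int_0^tN(s)\,ds$, a change of variables with $d\tau=N(t)\,dt$, so $\int_0^\infty\frac{N(t)^{-\alpha}}{\int_0^tN+1}\,dt = \int_0^\infty\frac{N(t)^{-1-\alpha}}{\tau+1}\,d\tau$; using $N(t)\ge1$ gives $N(t)^{-1-\alpha}\ge$ — wait, this goes the wrong way, so instead I would pass to characteristic intervals directly: $\sum_k |I_k| N_k^{-\alpha}/(\sum_{\ell\le k}|I_\ell|N_\ell + 1) \sim \sum_k N_k^{-2-\alpha}/(\sum_{\ell\le k}N_\ell^{-1}+1)$, and here one argues that if this sum converged then combined with $\sum_k N_k^{-1}=\infty$ one still reaches a contradiction after summation by parts — this bookkeeping, matching the divergence of $\int N$ against the decay factors $N_k^{-2-\alpha}$, is where the dimensional restriction $d\ge5$ and the precise value of $\alpha$ must be checked, and it is the step I expect to require the most care. (In \cite{KM} for $d=3$ the analogous computation is clean; for $d\ge5$ the larger power $\alpha$ makes the positive-direction argument more delicate, and I would mirror the Morawetz-with-characteristic-intervals argument used for $\dot H^{1/2}$ problems elsewhere, e.g. in \cite{KM}, adapting the exponent arithmetic.) The main obstacle, then, is not the insertion of the weight but verifying that the resulting positive integral genuinely diverges under only the hypotheses $\inf N\ge1$, $|x(t)|\lesssim_u\int_0^tN$, and forward blowup.
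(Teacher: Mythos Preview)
Your overall strategy is the paper's, but the H\"older step is miscomputed and this error cascades into all of the difficulties you describe at the end. On the ball $B=\{|x-x(t)|\le C(u)/N(t)\}$ you want to pass from the known lower bound on $\int_B|u|^{2d/(d-1)}$ to a lower bound on $\int_B|u|^{2(d+1)/(d-1)}$. Since $\tfrac{2(d+1)}{d-1}>\tfrac{2d}{d-1}$, H\"older gives
\[
\int_B |u|^{\frac{2d}{d-1}}\le\Big(\int_B|u|^{\frac{2(d+1)}{d-1}}\Big)^{\frac{d}{d+1}}|B|^{\frac{1}{d+1}},
\]
and $|B|\sim_u N(t)^{-d}$, so after rearranging you obtain $\int_B|u|^{\frac{2(d+1)}{d-1}}\gtrsim_u N(t)^{+1}$, a \emph{positive} power of $N(t)$. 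No interpolation against an upper bound is needed; the smallness of the ball works in your favor. Your claimed bound $N(t)^{-\alpha}$ has the wrong sign, and with a negative power the divergence argument you attempt is genuinely problematic (and the bookkeeping you flag as delicate would in fact fail).

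With the correct factor $N(t)$ in the numerator, everything collapses to a two-line computation. On $B$ you have $|x|\le |x(t)|+C(u)/N(t)\lesssim_u 1+\int_0^tN(s)\,ds$ (using $N\ge1$), so the Morawetz bound yields
\[
1\gtrsim_u\int_1^T\frac{N(t)}{1+\int_0^tN(s)\,ds}\,dt
=\int_1^T\frac{d}{dt}\log\Big(1+\int_0^tN(s)\,ds\Big)\,dt
=\log\frac{1+\int_0^TN}{1+\int_0^1N},
\]
and the right side diverges simply because $N\ge1$ forces $\int_0^TN\ge T\to\infty$. You do not need characteristic subintervals, summation by parts, or a separate Strichartz argument to extract $\int N=\infty$ from blowup; the hypothesis $\inf N\ge1$ already gives this for free. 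The dimensional restriction $d\ge5$ plays no role in this step.
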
	
\begin{proof}
Suppose $u$ were such a solution. In particular $u$ is nonzero, so that by Remark \ref{arzela ascoli} we may find $C(u)>0$ such that
			$$\int_{\vert x-x(t)\vert\leq\frac{C(u)}{N(t)}}\vert u(t,x)\vert^{\frac{2d}{d-1}}\,dx\gtrsim_u 1\quad
			\text{uniformly for }t\in[0,\infty).$$
Applying H\"older and rearranging, this implies
							\begin{equation}
							\label{ls lower bound}
	\int_{\vert x-x(t)\vert\leq\frac{C(u)}{N(t)}}\vert u(t,x)\vert^{\frac{2(d+1)}{d-1}}\,dx\gtrsim_u N(t)
	\quad\text{uniformly for }t\in[0,\infty).
							\end{equation}

We now let $T>1$ and use $u\in L_t^\infty\dot{H}_x^{1/2}$, \eqref{lsi}, and \eqref{ls lower bound} to see
		\begin{align*}
		1\gtrsim_u \int_1^T\int_{\vert x-x(t)\vert\leq\frac{C(u)}{N(t)}}
		\frac{\vert u(t,x)\vert^{\frac{2(d+1)}{d-1}}}{\vert x\vert}\,dx\,dt
		 \gtrsim_u \int_1^T\frac{N(t)}{\vert x(t)\vert+N(t)^{-1}}\,dt.
		\end{align*}
As $\inf_{t\in[1,\infty)} N(t)\geq 1$, to derive a contradiction it will suffice to show that
		\begin{equation}\label{contradiction}
		\lim_{T\to\infty}\int_1^T\frac{N(t)}{1+\vert x(t)\vert}\,dt=\infty.
		\end{equation}		
Recalling that $\vert x(t)\vert\lesssim_u \int_0^t N(s)\,ds$ for all $t\geq 0$, we get
	\begin{align*}
	\int_1^T\frac{N(t)}{1+\vert x(t)\vert}\,dt\gtrsim_u \int_1^T\frac{d}{dt}\log
					\bigg(1+\int_0^t N(s)\,ds\bigg)\,dt
					\gtrsim_u \log\bigg(\frac{1+\int_0^T N(s)\,ds}{1+\int_0^1 N(s)\,ds}\bigg).
	\end{align*}
As $\inf_{t\in[1,\infty)}N(t)\geq 1$, we conclude that \eqref{contradiction} holds, as needed. \end{proof}

\end{document}